\newtheorem{theorem}{Theorem}
\newtheorem{corollary}[theorem]{Corollary}
\newtheorem{proposition}[theorem]{Proposition}
\newtheorem{lemma}[theorem]{Lemma}
\newtheorem{definition}[theorem]{Definition}
\newtheorem{remark}[theorem]{Remark}
\numberwithin{theorem}{section}
\numberwithin{equation}{section}
\numberwithin{figure}{section}
\newcommand{\eps}{\varepsilon}
\newcommand{\ind}{\mathbbm{1}}
\newcommand{\wt}{\widetilde}
\newcommand{\mphi}{\widetilde\varphi}
\newcommand{\mB}{\widetilde B}
\newcommand{\mZ}{\widetilde{\mathbb Z}^2}
\newcommand{\R}{\mathbbm{R}}
\newcommand{\Z}{\mathbbm{Z}}
\newcommand{\D}{\mathbbm{D}}
\newcommand{\Hb}{\mathbbm{H}}
\newcommand{\Pb}{\mathbbm{P}}
\newcommand{\Ac}{\mathcal{A}}
\newcommand{\Bc}{\mathcal{B}}
\newcommand{\cC}{\mathcal{C}}
\newcommand{\Cc}{\mathcal{C}}
\newcommand{\Lc}{\mathcal{L}}
\newcommand{\Nc}{\mathcal{N}}
\newcommand{\Ec}{\mathcal{E}}
\newcommand{\loc}{\mathrm{loc}}
\newcommand{\dist}{\mathrm{dist}}
\newcommand{\out}{\mathrm{out}}
\newcommand{\connect}[1]{\stackrel{#1}\longleftrightarrow}
\title{Arm events in critical planar loop soups}
\author{Yijie Bi\footnote{\url{2200010925@stu.pku.edu.cn}; Peking University} \and  Yifan Gao\footnote{\url{yifangao@cityu.edu.hk}; City University of Hong Kong} \and Pierre Nolin\footnote{\url{bpmnolin@cityu.edu.hk}; City University of Hong Kong} \and Wei Qian\footnote{\url{weiqian0@hku.hk}; University of Hong Kong}}
\date{}
\begin{document}

\maketitle

\begin{abstract}
We establish up-to-constants estimates for arm events in the Brownian loop soup on the 2D metric graph associated with the square lattice. More specifically, we consider two natural geometric events: first, ``bulk'' four-arm events, corresponding to two large connected components of loops getting close to each other; and then, two-arm events in the half-plane, used to estimate the probability that a cluster of loops approaches the boundary. Our proof relies on an estimate by Lupu-Werner \cite{lupu2018random}, thanks to the well-known coupling between the loop soup and the Gaussian free field on the metric graph \cite{MR2815763,lupu2016loop}.

As a consequence, we also obtain up-to-constant upper bounds for the corresponding arm events in the random walk loop soup on the square lattice. In this way, we verify Assumptions~5.7 and~5.11 in \cite{GNQ2}: in a box with side length $N$, this implies the existence of crossings where the Gaussian free field remains below $a \sqrt{\log \log N}$ in absolute value, for some constant $a > 0$ large enough.
\end{abstract}

\section{Introduction}

In this paper, we study the Brownian loop soup (BLS) on a planar metric graph, which also yields consequences for the random walk loop soup (RWLS) on the corresponding discrete graph. In particular, we prove two assumptions stated in \cite{GNQ2}: in that paper, these assumptions are shown to have implications on the percolation of two-sided level sets in the discrete Gaussian free field.
We restrict ourselves to the loop soup with critical intensity $1/2$, which is intimately related to the Gaussian free field (GFF), thanks to Le Jan's isomorphism \cite{MR2815763}. We also use crucially a coupling by Lupu \cite{lupu2016loop}, which identifies the sign clusters of the GFF with the clusters of the BLS.

For simplicity, we restrict to the square lattice $\Z^2$, whose vertices are the points of $\R^2$ with integer coordinates (any two vertices being connected by an edge if they lie at Euclidean distance $1$). In this discrete setting, the RWLS with intensity $\alpha > 0$ is defined as a Poisson point process of finite loops on $\Z^2$ with intensity $\alpha \cdot \nu$, where $\nu$ is a natural (infinite) measure on random walk loops. It is known since \cite{SW2012} that the Brownian loop soup (BLS), which is the corresponding process in the continuum, displays a phase transition at the specific value $\alpha = 1/2$, in the following sense. Consider the BLS in a bounded domain. For each intensity $\alpha \leq 1/2$, the BLS contains almost surely (a.s.) infinitely many macroscopic connected components (or clusters) of loops; while for each $\alpha > 1/2$, there is only one such cluster of loops. Here (and in what follows), connectedness is defined in the natural way, i.e. two loops $\gamma$, $\gamma'$ are connected if one can go from $\gamma$ to $\gamma'$ by following a finite chain of loops (satisfying that any two successive loops in that chain overlap). At the critical intensity $\alpha = 1/2$, the case on which we focus in this paper, the macroscopic clusters ``almost touch''.

We also consider the BLS and the GFF on the associated metric graph (also known as cable system), that we denote by $\mZ$: this object is obtained by ``replacing'' each edge of the discrete graph $\Z^2$ with a line segment of length $1$. On $\mZ$, loop soup clusters correspond exactly to sign clusters of the GFF, which enables the use of tools for the GFF to deduce properties of the BLS.

Our main result in the present paper consists in up-to-constants estimates for arm events in the critical BLS on $\mZ$. More specifically, we consider the so-called four-arm events in the bulk, as well as the two-arm events in the half-plane. For these two families of events, we are able to obtain up-to-constants power law estimates, where the corresponding exponents are known to be, respectively, equal to $2$ and $1$. We state this result in an informal way now, and we refer the reader to Section~\ref{sec:setting} for precise definitions of the objects involved. For any integer $n \geq 0$, we introduce the notation $\partial {\wt B}_n :=\{v \in \mZ: \|v\|_{\infty} = n\}$, where $\|.\|$ denotes the usual $l_{\infty}$ norm on $\R^2$.

First, we consider the four-arm events. They are natural geometric events, used in particular in \cite{GNQ2} to quantify the fact that two distinct connected components of loops come close to each other. For any $1 \leq k\le n/2$, we denote by $\wt\pi_4(k,n)$ the probability that in ${\wt B}_{2n}$, there exist two outermost clusters of loops such that each of them intersects the two squares $\partial {\wt B}_k$ and $\partial {\wt B}_n$. 
\begin{theorem} \label{thm:main}
Consider the BLS in $\mZ$ with intensity $\alpha = 1/2$. There exist two absolute constants $c_1, c_2 > 0$ such that for all $1 \leq k\le n/2$,
\begin{align}\label{eq:four-arm}
c_1 \left(\frac{k}{n}\right)^2 \leq \wt\pi_4(k,n) \leq c_2 \left(\frac{k}{n}\right)^2.
\end{align}
\end{theorem}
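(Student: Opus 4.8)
The plan is to transfer the event to the metric-graph GFF and then run a multi-scale argument driven by the Lupu--Werner estimate. By Lupu's isomorphism, the clusters of the critical BLS on $\mZ$ coincide with the sign clusters of the metric-graph GFF $\mphi$, so I would first reformulate $\wt\pi_4(k,n)$ in terms of $\mphi$. The existence of two distinct outermost clusters, each joining $\partial\wt B_k$ to $\partial\wt B_n$, corresponds (up to constants, after absorbing the ``outermost'' requirement and the passage from $\wt B_n$ to $\wt B_{2n}$ into harmless geometric modifications obtained by exploring from outside) to a \emph{polychromatic four-arm event}: four crossings of the annulus $\wt B_n\setminus\wt B_k$ on which $\mphi$ has alternating signs $+,-,+,-$. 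The two positive arms carry the two clusters, while the intervening negative arms are forced by the continuity of $\mphi$ and by the fact that distinct sign clusters are separated by the zero set of $\mphi$. A first point to verify carefully is that this reformulation is genuinely two-sided up to constants.

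A central structural feature I would exploit is that, along each cable of $\mZ$, the field $\mphi$ behaves like a one-dimensional Brownian motion, which makes sign-change and connection probabilities exactly computable through Brownian non-crossing estimates and underlies the sharpness of the whole argument. The quantitative engine is the Lupu--Werner estimate \cite{lupu2018random}, which I would use to control, with the correct power-law order, the relevant crossing and connection probabilities for the sign clusters of $\mphi$. Together with the domain Markov property of $\mphi$---conditioning on the sign cluster of a boundary arc, viewed as a local set, leaves an \emph{independent} zero-boundary GFF in the complement, since the field vanishes on the cluster's outer boundary---this provides both a decoupling between well-separated annuli and a \emph{quasi-multiplicativity} statement for the four-arm probabilities, which is what allows one to concatenate the $\asymp\log(n/k)$ dyadic scales without losing a factor that is exponential in the number of scales.

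For the upper bound in \eqref{eq:four-arm}, I would dominate the four-arm event by the product over dyadic scales of the single-scale crossing probabilities supplied by Lupu--Werner (equivalently, run a first-moment argument over the inner sphere $\partial\wt B_k$ combined with the connection estimate), so that the known exponent $2$ emerges from the sharp one-scale input. For the lower bound I would instead build the configuration by hand: using Russo--Seymour--Welsh-type crossing estimates for $\mphi$ (available through the coupling and the Brownian structure along cables), FKG applied to the monotone pieces, and a separation-of-arms procedure, I would force, scale by scale, two disjoint positive radial crossings kept apart by two negative ones, gluing the scales via the Markov property. The matching power $(k/n)^2$ is obtained by ensuring that each dyadic scale contributes a factor of the correct order that is uniformly positive and bounded away from $1$.

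The main obstacle, as is typical for up-to-constants multi-arm estimates, is the lower bound, and here it carries two model-specific difficulties. First, the four-arm event is \emph{not} monotone in $\mphi$, since it requires both positive and negative crossings, so FKG cannot be applied to the event as a whole; I would circumvent this by exploring the sign clusters one at a time and invoking FKG and the Markov property only on the conditionally monotone increments. Second, and more delicately, I must keep the two positive clusters genuinely \emph{distinct} across all scales---this is precisely the reflection at the level of $\mphi$ of the fact that at criticality the macroscopic clusters almost touch but do not merge---which requires controlling the thin negative ``moat'' separating them uniformly over scales. This is exactly the step where the Lupu--Werner estimate, together with the quasi-multiplicativity and separation-of-arms machinery, does the essential work.
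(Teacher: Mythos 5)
Your proposal assembles the right toolbox (Lupu's isomorphism, the Lupu--Werner resistance estimate, quasi-multiplicativity, separation of arms), but it is missing the one idea that actually produces the exponent $2$, and the mechanism you propose in its place does not work. You say the upper bound follows by ``dominating the four-arm event by the product over dyadic scales of the single-scale crossing probabilities supplied by Lupu--Werner,'' and that the lower bound is built ``scale by scale'' so that ``each dyadic scale contributes a factor of the correct order.'' A product of per-scale factors over $\log_2(n/k)$ scales yields $(k/n)^{\log_2(1/\lambda)}$ for whatever per-scale constant $\lambda$ you can prove, so this scheme can only give \emph{some} power law, with an exponent you cannot pin down to $2$; and the Lupu--Werner estimate is intrinsically a one-arm statement (it converts a resistance-drop probability into $\Pb[W_t<\E[\mphi(x_0)],\,\forall t\le c]\asymp\E[\mphi(x_0)]$, i.e.\ a harmonic-measure quantity), so it does not supply a ``single-scale four-arm factor'' of known size, nor the conditional probability of a second, disjoint cluster crossing given the first. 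Your alternative phrasing, a first-moment argument over the inner sphere $\partial\wt B_k$, also cannot work: that sphere has $\asymp k$ points and a one-arm connection estimate attached to each, which is dimensionally wrong for a four-arm exponent.

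The paper's route is different and is worth internalizing. After reducing to $k=1$ by quasi-multiplicativity (Lemma~\ref{lem:quasi}), Proposition~\ref{prop:k=1} is proved by a global counting argument over a two-dimensional annulus: one considers $N=\sum_z \ind_{\wt\Ac_z}$, the number of lattice points $z\in \wt A_{n/8,3n/8}$ at which a local four-arm event occurs, so that $\E[N]\asymp n^2\,\wt\pi_4(n)$ by locality and Lemma~\ref{lem:qua1}. The whole point is then to show $\E[N]\asymp 1$. This is done by exploring the union $\Lambda_n$ of clusters meeting $\mB_{n/8}$ and identifying $N$ (up to bounded multiplicity, and on the event $\wt F_n$ of Lemma~\ref{lem:cluster}) with the number of points on $\partial_{\out}\Lambda_n$ that connect to $\partial\mB_{n/2}$ in the complementary loop soup. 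Lemma~\ref{lem:NG} shows this expected number $\Nc(\Lambda_n)$ is of constant order: the isomorphism plus Lupu--Werner give $\Pb[u\connect{}\partial\mB_{n/2}]\asymp \mathrm{Hm}(x_0,u;\Lambda\cup\{u\})$ for each boundary point $u$, and summing harmonic measure over the outer boundary gives a quantity bounded above by $1$ and below by the harmonic measure of $\mB_{n/8}$. It is this identity $n^2\,\wt\pi_4(n)\asymp\Nc(\Lambda_n)\asymp 1$ --- not a telescoping over scales --- that forces the exponent to equal the spatial dimension $2$. Without this (or an equivalent exact mechanism), your argument establishes at best a power-law bound with an unidentified exponent, which is precisely what the paper says was already known from \cite{GNQ1} and is insufficient here.
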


Moreover, we also consider the restriction of the BLS to the upper half-plane $\Hb$ (obtained by simply discarding the loops that do not remain entirely in $\mZ\cap\Hb$). In that case, let $\wt\pi_2^+(k,n)$ be the probability of the event that in ${\wt B}_{2n} \cap \Hb$, there is one connected component of loops in $\Hb$ which intersects $\partial {\wt B}_k$ and $\partial {\wt B}_n$. This event also arises naturally in \cite{GNQ2}.

\begin{theorem} \label{thm:main2}
Consider the BLS in $\mZ \cap \Hb$ with intensity $\alpha = 1/2$. For all $1 \leq k\le n/2$,
\begin{align}\label{eq:two-arm}
c^+_1 \: \frac{k}{n} \leq \wt\pi_2^+(k,n) \leq c^+_2 \: \frac{k}{n},
\end{align}
where again $c^+_1, c^+_2 > 0$ are two absolute constants.
\end{theorem}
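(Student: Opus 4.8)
The plan is to transfer the problem to the metric-graph Gaussian free field (GFF) $\mphi$ via Lupu's isomorphism \cite{lupu2016loop,MR2815763}, under which the clusters of the critical ($\alpha=1/2$) loop soup on $\mZ\cap\Hb$ coincide with the sign clusters of $\mphi$ with Dirichlet boundary conditions on $\partial(\wt B_{2n}\cap\Hb)$ (in particular on the real axis). Under this identification, $\wt\pi_2^+(k,n)$ becomes the probability that some sign cluster of $\mphi$ crosses the half-annulus $(\wt B_n\setminus\wt B_k)\cap\Hb$ from $\partial\wt B_k$ to $\partial\wt B_n$. By the symmetry $\mphi\mapsto-\mphi$, up to a factor $2$ it suffices to work with positive sign clusters, and I would then prove the two matching inequalities $\wt\pi_2^+(k,n)\asymp k/n$ separately.

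For the upper bound I would slice the half-annulus into $\asymp\log_2(n/k)$ concentric dyadic sub-annuli $A_j=(\wt B_{2^{j+1}}\setminus\wt B_{2^j})\cap\Hb$. A crossing of the whole half-annulus forces a positive-sign crossing of each $A_j$, so the key input is a single-scale estimate: the probability that a positive sign cluster crosses a half-annulus of bounded modulus near the Dirichlet boundary is at most a universal constant $p<1$. The role of the Lupu-Werner estimate \cite{lupu2018random}, combined with the exact two-point connection formula for $\mphi$ on the metric graph, is to identify this per-scale factor as precisely the value (heuristically $1/2$ per dyadic scale) that yields the exponent $1$, and to show that the spurious logarithmic corrections carried by naive two-point bounds cancel against the entropy of the possible crossing locations. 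I would then chain the scales using the domain Markov property of $\mphi$ together with an arm-separation / quasi-multiplicativity argument, concluding $\wt\pi_2^+(k,n)\le c_2^+\,(k/n)$.

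For the lower bound I would construct a crossing cluster scale by scale. Within each sub-annulus $A_j$ one has, via a Russo--Seymour--Welsh-type input for metric-graph sign clusters (again seeded by \cite{lupu2018random}), a positive crossing together with a connecting ``circuit'' overlapping the adjacent scale, each of probability bounded below. Using the FKG inequality for $\mphi$ (valid since its Green's function on $\mZ$ is nonnegative) to lower-bound the intersection of these increasing events, and the Markov property to glue the successive crossings into a single positive sign cluster, quasi-multiplicativity then aggregates the exact per-scale factors with only a universal multiplicative loss, giving $\wt\pi_2^+(k,n)\ge c_1^+\,(k/n)$.

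The main obstacle is twofold. First, establishing quasi-multiplicativity and arm separation for GFF sign clusters in the half-plane: unlike Bernoulli percolation, $\mphi$ has long-range dependence, so the multiplicative chaining of scales must be carried out through the Markov decomposition of the field across annuli and a careful control of the conditional boundary data, rather than through independence. Second, pinning the exponent to exactly $1$: since an $\asymp$-estimate with a fixed exponent requires the per-scale factor to converge to a definite value, the argument cannot rely on order-of-magnitude RSW bounds alone but needs the \emph{exact} per-scale asymptotics extracted from the Lupu-Werner estimate, together with the cancellation of the logarithmic factors inherent in the GFF two-point function. This is governed by the Dirichlet behavior near the real axis, which is precisely the feature distinguishing this half-plane statement (exponent $1$) from the bulk four-arm estimate of Theorem~\ref{thm:main} (exponent $2$).
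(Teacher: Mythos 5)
There is a genuine gap at the heart of your proposal, and it is exactly the one you flag as the ``main obstacle'' without resolving: a dyadic multi-scale chaining argument, even with per-scale factors controlled via quasi-multiplicativity and the Markov property, can only yield bounds of the form $(k/n)^{\alpha+o(1)}$ unless you can show that the per-scale factor converges to its limiting value \emph{summably fast} across scales. Your proposal asserts that the ``exact per-scale asymptotics'' can be ``extracted from the Lupu--Werner estimate'' and that ``logarithmic corrections cancel against the entropy of crossing locations,'' but gives no mechanism for either claim. This is precisely the difficulty the paper's introduction points to: the earlier CLE-based approach of \cite{GNQ1} already gives the exponent with an $o(1)$ error, and the whole point of Theorem~\ref{thm:main2} is to remove that error, which no amount of scale-by-scale RSW/FKG bookkeeping will do on its own.

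The paper's actual route avoids multi-scale analysis for the upper bound entirely. It considers the metric-graph GFF $\mphi^k_n$ on $\mB_{2n}^+$ with boundary value $1$ on the segment $l_{3k/2}$ and $0$ elsewhere, and applies \cite[Corollary 1]{lupu2018random}: the probability that the positive cluster attached to $l_{3k/2}$ reaches $(\partial\mB_n)^+$ is, via an effective-resistance reformulation, equal to the probability that a standard Brownian motion stays below $\E[\mphi^k_n(x_0)]$ for a fixed time, which is $\asymp \E[\mphi^k_n(x_0)]$. Since $\E[\mphi^k_n(x_0)]$ is a harmonic measure $\asymp k/n$, the exponent $1$ comes out in a single step (Lemma~\ref{le:gff}). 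The upper bound on $\wt\pi_2^+(k,n)$ then follows because, under the isomorphism of Lemma~\ref{lem:iso}, the GFF connection event dominates (up to a constant coming from an independent excursion event) the two-arm event. The lower bound is also not built scale by scale: the paper proves $\wt\pi_2^+(1,n)\gtrsim 1/n$ directly by the same GFF argument with a single boundary point, and then inverts the quasi-multiplicativity inequality $\wt\pi_2^+(1,n)\lesssim \wt\pi_2^+(1,k)\,\wt\pi_2^+(k,n)$ using the already-proved upper bound $\wt\pi_2^+(1,k)\lesssim 1/k$. You have the right ingredients on the table (isomorphism, Lupu--Werner, quasi-multiplicativity), but the way you propose to assemble them would not close; the Lupu--Werner estimate must be used to compute a boundary-to-bulk connection probability exactly up to constants in one shot, not to calibrate per-scale factors.
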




As a consequence of Theorems~\ref{thm:main} and~\ref{thm:main2}, we are able to derive the following upper bounds in the discrete setting (see \eqref{eq:arm_d} for the definitions of $\pi_2^+(k,n)$ and $\pi_4(k,n)$).

\begin{corollary}\label{cor:discrete-graph}
  Consider the RWLS in $\Z^2$ with intensity $\alpha=1/2$. There exist two absolute constants $c_3, c^+_3>0$ such that for all $1 \leq k\le n/2$,
\begin{align}
\label{eq:d-four-arm}
    &\pi_4(k,n)\le c_3 \left(\frac{k}{n}\right)^2,\\
    \label{eq:d-two-arm}
    &\pi_2^+(k,n)\le c^+_3 \: \frac{k}{n}.
\end{align}
\end{corollary}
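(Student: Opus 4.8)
The plan is to transfer the metric-graph estimates of Theorems~\ref{thm:main} and~\ref{thm:main2} to the discrete RWLS by means of the coupling of Lupu \cite{lupu2016loop} between the RWLS on $\Z^2$ and the BLS on $\mZ$. Under this coupling, the traces on $\Z^2$ of the metric-graph loops have the law of the RWLS, and, crucially for us, metric-graph connectivity is \emph{coarser} than discrete connectivity: if two RWLS loops share a vertex, then the corresponding metric-graph loops lie in the same cluster, so every RWLS cluster is contained in (the vertex set of) a single BLS cluster on $\mZ$. I would use this to realise both processes on the same probability space and then compare the relevant geometric events.

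The two-arm bound \eqref{eq:d-two-arm} is the easy half. Under the coupling, any connected component of RWLS loops contained in $\Z^2 \cap \Hb$ that meets both $\partial \wt B_k$ and $\partial \wt B_n$ is contained in a single BLS cluster that also meets both squares; moreover, since on both sides we discard the loops leaving $\mZ \cap \Hb$, this BLS cluster remains in $\Hb$. Hence the discrete two-arm event is contained in its metric-graph counterpart, so $\pi_2^+(k,n) \le \wt\pi_2^+(k,n)$, and \eqref{eq:d-two-arm} follows from Theorem~\ref{thm:main2} with $c_3^+ = c_2^+$.

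The four-arm bound \eqref{eq:d-four-arm} is the delicate half, and the main obstacle lies precisely here: the naive inclusion fails, because two RWLS clusters that are disjoint in $\Z^2$ may become connected in $\mZ$ through the additional bridges and edge-loops that the coupling adds along the edges. I would therefore argue that the required separation survives on the metric graph. By planarity, the two disjoint crossing RWLS clusters of the discrete four-arm event are separated, inside the annulus $\wt B_n \setminus \wt B_k$, by two crossing arms carrying no loops; under the coupling these vacant arms lie in the zero set $\{\varphi = 0\}$ of the metric-graph GFF, via Le Jan's identification of the occupation field with $\tfrac12 \varphi^2$ \cite{MR2815763}, so they act as barriers that a BLS cluster cannot cross. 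Consequently the two discrete crossing clusters remain in distinct BLS clusters, each still crossing a slightly shrunken annulus, which yields $\pi_4(k,n) \le \wt\pi_4(k',n')$ for some $k' \asymp k$ and $n' \asymp n$.

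The power-law form of Theorem~\ref{thm:main} then gives $\wt\pi_4(k',n') \le c_2 (k'/n')^2 \le c_3 (k/n)^2$, since $k' \asymp k$ and $n' \asymp n$ (the regime $k \asymp n$, where $(k/n)^2 \asymp 1$, being trivial), establishing \eqref{eq:d-four-arm}. The step I expect to require the most care is controlling the metric-graph merging described above: making rigorous that a discrete vacant separating arm forces a genuine obstruction in $\mZ$, so that the alternating four-arm structure is preserved under the passage from $\Z^2$ to $\mZ$. This is essential, since a cruder factorisation of the four-arm event into two disjoint crossing clusters (for instance via a van~den~Berg--Kesten-type bound) would only use the two occupied arms and hence yield the wrong exponent.
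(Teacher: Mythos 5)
Your treatment of \eqref{eq:d-two-arm} is exactly the paper's: the boundary two-arm event is increasing, the RWLS is stochastically dominated by the metric-graph BLS under Lupu's coupling, hence $\pi_2^+(k,n)\le\wt\pi_2^+(k,n)$ and Theorem~\ref{thm:main2} gives the bound. No issue there.

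The four-arm half has a genuine gap, and it is precisely the point the paper flags when it says \eqref{eq:d-four-arm} ``cannot be obtained from \eqref{eq:four-arm} directly.'' First, two distinct outermost clusters of $\Lc_{B_{2n}}$ crossing $A_{k,n}$ are merely disjoint as vertex sets; nothing forces a separating arm of loop-free vertices between them (other loops may occupy the region between them, and the two clusters may run adjacent to each other along a macroscopic interface). Second, even a genuinely unvisited vertex does not lie in $\{\varphi=0\}$: Le Jan's identity requires the trivial and edge-interior loops, which give such a vertex a positive occupation field, so $\varphi\neq 0$ there a.s. Third, and decisively, the merging mechanism of Lupu's coupling acts through edge interiors: if $u_1\in\Cc_1$ and $u_2\in\Cc_2$ are neighbours in $\Z^2$, the metric-graph loops through $u_1$ and through $u_2$ penetrate the edge $\{u_1,u_2\}$ from opposite ends and overlap with probability bounded away from $0$, with no vacant vertex involved. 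Along the interface between the two crossing clusters there are typically of order $n$ such adjacent pairs, so conditionally on $\Ac_{\loc}(k,n)$ the two clusters merge on $\mZ$ with conditional probability close to $1$; the inclusion of the discrete four-arm event into any metric-graph four-arm event (even with radii $k'\asymp k$, $n'\asymp n$) is simply false, and no constant-conditional-probability argument rescues it. The paper instead reruns the second-moment counting argument of Proposition~\ref{prop:k=1} in the discrete setting: it lower-bounds $\E[N^k]=\sum_z\Pb[\Ac_z(k)]$ by $n^2\pi_4(k,n)$ via a discrete gluing construction, and upper-bounds it by $k^2$ via Lemma~\ref{le:d-NG}, whose proof conditions on the inner cluster $\Lambda_n'$ and reduces everything to a \emph{one-arm} connection probability --- a monotone event, for which stochastic domination by the metric-graph BLS and the effective-resistance estimate of \cite{lupu2018random} can legitimately be applied. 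If you want to keep a comparison-based strategy, that is the structural move to borrow: isolate a monotone sub-event before invoking the coupling.
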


Note that the RWLS on the discrete graph is stochastically dominated by the BLS on the corresponding metric graph, and the boundary two-arm event is increasing w.r.t.\ the loop soup. Hence, $\pi_2^+(k,n)\le \wt\pi_2^+(k,n)$, and \eqref{eq:d-two-arm} is a direct consequence of the upper bound from \eqref{eq:two-arm}, so that later we will only need to prove \eqref{eq:d-four-arm}. In contrast, the four-arm event is not monotone, and \eqref{eq:d-four-arm} cannot be obtained from \eqref{eq:four-arm} directly. Nevertheless, slightly modifying the proof for the upper bound of \eqref{eq:four-arm}, we can still get the desired upper bound \eqref{eq:d-four-arm}.

 The upper bounds in Corollary~\ref{cor:discrete-graph} allow us to verify Assumptions~5.7 and~5.11 in \cite{GNQ2}. Hence, from the reasonings in that paper, we can deduce in particular the existence of low crossings in the critical Random Walk Loop Soup, where the occupation field remains below $a \log \log N$, for a constant $a$ large enough (see Theorems~5.9 and~5.13 in \cite{GNQ2}, which were conditional results). In addition, Le Jan's isomorphism \cite{MR2815763} immediately yields the corresponding results for the discrete Gaussian free field (see Theorem~5.14 in the same paper), but now with level $a' \sqrt{\log \log N}$.


Finally, note that these arm exponents were in fact already derived in $\cite{GNQ1}$ (not only for $\alpha = 1/2$, but also for all subcritical intensities $\alpha < 1/2$), at least for the discrete graph (we have not attempted to show the corresponding results for the metric graph, but we expect that this case can be handled through similar arguments). There, the convergence of the RWLS to the BLS is used, combined with the connection between, on the one hand, outer boundaries of loop soup clusters in the BLS with intensity $\alpha$, and, on the other hand, the conformal loop ensemble (CLE) process with parameter $\kappa = \kappa(\alpha) \in (8/3,4]$. However, the bounds obtained there still contained an $o(1)$ in the exponent, i.e.\ potential logarithmic corrections, lost when transferring the continuous estimates for the BLS to the (discrete) RWLS. This was good enough for our applications in \cite{GNQ2} when $\alpha < 1/2$, since the corresponding arm exponents are then $>2$ and $>1$, respectively. But at criticality, stronger up-to-constants estimates are needed to get the more precise results, with a threshold of order $\log \log N$.

\section{Preliminaries} \label{sec:prelim}

\subsection{Setting} \label{sec:setting}

Recall that we denote the (two-dimensional) square lattice by $\Z^2$, and the metric graph associated with it by $\mZ$. For any $n \geq 0$, we let $B_n:=[-n,n]^2\cap\Z^2$ be the ball with radius $n$ (for $\|.\|_{\infty}$) centered on $0$, and for $0 \leq k \leq n$, we let $A_{k,n}:=\{v\in\Z^2: k \le \|v\|_{\infty} \le n\}$. We let $\Hb:=\{ (x,y)\in \R^2: y>0 \}$ be the upper half plane, and for a subset $A\subseteq\R^2$, we use the notation $A^+ := A\cap\Hb$ (resp. $A^- := A\cap(-\Hb)$). For $A\subseteq \Z^2$, we consider its metric graph version $\wt A$ in $\mZ$, where for every pair of vertices $v$, $v'$ in $A$ with Euclidean distance $1$, we introduce a line segment of length $1$ between $v$ and $v'$. For $z\in \R^2$ and $A\subseteq\R^2$, we introduce $\dist(z,A)=\inf_{w\in A} \|z-w\|_\infty$. 

For $A,B,F\subset\mZ$, we write
$$A\connect{F}B$$
to express one of the following facts: if $F\cap(\mZ\setminus \Z^2)\neq\emptyset$, then there exists a continuous path $\pi\subset F$ starting in $A$ and ending in $B$; otherwise, it means that there is a finite nearest neighbor path $(z_1,...,z_k)$ in $F$ with $z_1\in A$ and $z_k\in B$. When $F=\{z:\varphi(z)>0\}$, where $\varphi$ is a real-valued field defined on a subset of $\Z^2$ or $\mZ$, we simply write $A\connect{\varphi>0}B$. Moreover, we allow $F$ to be a collection of sets, in which case we will consider the union of the sets.

\subsubsection*{Random walk loop soup}
A \emph{rooted loop} $\gamma$ in $\Z^2$ is a path $(z_0,\cdots,z_j)$ in $\Z^2$ with some length $j \geq 0$, i.e.\ a sequence of vertices such that $z_k$ and $z_{k+1}$ are neighbors for each $k=0,\ldots,j-1$, which moreover satisfies $z_0=z_j$. In this case, we write $|\gamma| := j$. We also consider \emph{unrooted loops}, which are equivalence classes of rooted loops under rerooting (cyclic permutation of the vertices). In the remainder of the paper, we only consider unrooted loops, so in most instances we simply say ``loops''.

We introduce the measure $\nu$ on (unrooted) loops $\gamma$, defined by
\begin{equation} \label{eq:def_nu}
	\nu(\gamma):=\frac{4^{-|\gamma|}}{m_{\gamma}} \ind_{|\gamma|\geq 2},
\end{equation}
where $m_{\gamma}$ is the multiplicity of $\gamma$. For any $\alpha > 0$, the RWLS in $\Z^2$ with intensity $\alpha$, denoted by $\Lc$, is then simply the Poisson point process with intensity $\alpha\nu$.\footnote{Normally, one also needs to add in the trivial loops which consist of single points. For the isomorphism theorems that we mention later, the trivial loops are important, since they contribute to the occupation field of the RWLS.  However, in this paper, we are only interested in the arm events of the RWLS, hence do not need to consider the trivial loops.} In this paper, we only look at the $\alpha=1/2$ case. For a subset $D \subseteq \Z^2$, we use $\Lc_D$ to denote the corresponding loop soup on $D$, which is made of all the loops in the collection $\Lc$ that remain entirely in $D$ (that is, we only keep a loop $\gamma$ if all its vertices belong to $D$).
For brevity, we write $\Lc_n:=\Lc_{B_n}$.

\subsubsection*{Brownian loop soup on the metric graph}
A Brownian motion $X$ on the metric graph $\wt{\mathbb{Z}}^2$ moves like a standard one-dimensional Brownian motion on each edge (which is an interval of length $1$). At each vertex $z$, $X$ makes a Poisson point process of one-dimensional Brownian excursions, where each excursion has probability $1/4$ to live on one of the $4$ edges adjacent to $z$, and this process stops when $X$ hits a neighboring vertex of $z$ (i.e.\ the first time that an excursion reaches distance $1$ from $z$). Let $p_t(x,y)$ be the transition kernel of $X$. Let $\mathbb{P}^t_{x,y}(\cdot)$ be the probability measure of a Brownian bridge from $x$ to $y$ with duration $t$.

A \emph{rooted loop} $\wt\gamma$ in $\wt{\mathbb{Z}}^2$ is a curve $(\gamma(t), 0\le t\le T)$ with some length $T>0$ and $\gamma(0)=\gamma(T)$, such that $\gamma(t)\in \wt{\mathbb{Z}}^2$ for all $0\le t\le T$. We can also unroot a loop by forgetting its starting point. The Brownian loop measure on $\wt{\mathbb{Z}}^2$ is a measure on unrooted loops given by
\[
\wt\nu(\wt\gamma):=\int_{z\in \wt{\mathbb{Z}}^2}\int_0^\infty \mathbb{P}^t_{z,z}(\wt\gamma) p_t(z,z) \frac{dt}{t} dz,
\]
where $dz$ is according to the Lebesgue measure on $\wt{\mathbb{Z}}^2$.
For any $\alpha>0$, the BLS in $\mZ$ with intensity $\alpha$, denoted by $\wt\Lc$, is the Poisson point process with intensity $\alpha \wt\nu$. We set $\alpha=1/2$ throughout. For a subset $\wt D\subseteq \mZ$, we use $\wt\Lc_{\wt D}$ to denote the BLS on $\wt D$, which is the collection of loops in $\wt \Lc$ that stay entirely in $\wt D$. For brevity, we write $\wt\Lc_n:=\wt\Lc_{\wt B_n}$.

For $\wt D\subset \mZ$, let $\mathrm{int} (\wt D)$ be the interior of $\wt D$ in $\mZ$. Let $\partial \wt D$ be the closure of $\wt D$ minus $\mathrm{int} (\wt D)$. For $x\in \mathrm{int}(\wt D)$, the Brownian motion on $\mZ$ started from $x$ induces a probability measure on Brownian excursions started from $x$ stopped upon hitting $\partial \wt D$. The restriction of this probability measure on excursions ending at $y$ for each $y\in \partial \wt D$ induces a (non probability) measure $\wt \nu^{\wt D}_{x, y}$.
For $x\in \partial \wt D$, let $x^1_\eps, \ldots, x^k_\eps$ be the points in $\partial \wt D$ which have distance $\eps$ to $x$.
For $x, y \in \partial D$, we define the boundary-to-boundary excursion measure 
\begin{align}\label{eq:exc}
    \wt \nu_{x,y}^{\wt D}=\lim_{\eps\to 0} \eps^{-1}\sum_{j=1}^k \wt\nu^{\wt D}_{x^j_\eps, y}.
\end{align}


\subsubsection*{Arm events in a loop soup}
Now, we follow the notion of arm events introduced in \cite{GNQ1,GNQ2}. 

\begin{definition} \label{def:2n arm event for rwls}
	Let $1 \leq k<n$ and $D\subseteq \Z^2$. 
    The \emph{interior four-arm event} in the annulus $A_{k,n}$ and the \emph{boundary two-arm event} in the semi-annulus $ A^+_{k,n}$, are respectively defined as
\begin{itemize}
\item $\Ac_{D}(k,n) := \{$there are at least two outermost clusters in $\Lc_{D}$ crossing $ A_{k,n}\}$
\item $\Bc_{D}(k,n) := \{$there is a cluster in $\Lc_{D^+}$ crossing $ A^+_{k,n}\}$.
\end{itemize}
The \emph{local arm events} are defined by $\Ac_{\loc}(k,n):=\Ac_{ B_{2n}}(k,n)$ and $\Bc_{\loc}(k,n):=\Bc_{ B_{2n}}(k,n)$. 

We define the corresponding arm events on the metric graph in the same way, and use the symbol ``$\sim$'' above the notation to distinguish this case. For instance, $\wt\Ac_{\wt D}(k,n)$ denotes the event that there are at least two outermost clusters in $\wt\Lc_{\wt D}$ crossing $\wt A_{k,n}$, and $\wt\Ac_{\loc}(k,n):=\wt\Ac_{\wt B_{2n}}(k,n)$.
\end{definition}

In this paper, we investigate the asymptotics of the following four types of arm events: 
\begin{align}
\label{eq:arm_d}
&\pi_2^+(k,n):=\Pb[ \Bc_{\loc}(k,n) ], \quad \pi_4(k,n):=\Pb[ \Ac_{\loc}(k,n) ],\\
\label{eq:amr_m}
&\wt\pi_2^+(k,n):=\Pb[ \wt\Bc_{\loc}(k,n) ], \quad
\wt\pi_4(k,n):=\Pb[ \wt\Ac_{\loc}(k,n) ].
\end{align}
When $k=1$, we simply drop $k$ from the notation.





\subsubsection*{Isomorphism theorems}

Le Jan \cite{MR2815763} proved in a rather general setting that the occupation field of a loop soup (either the BLS in $\mathbb{R}^d$ for $d=1,2,3$ or a discrete RWLS on a general graph) in a domain with non-polar boundary is equal to $1/2$ times the square of a GFF with zero boundary conditions in the same (continuous or discrete) domain.
This is the loop-soup version of a family of isomorphism theorems, dating back to
\cite{Symanzik66Scalar,Symanzik1969QFT,
	BFS82Loop,Dynkin1984Isomorphism,
	Dynkin1984IsomorphismPresentation,
	Wolpert2,
	EKMRS2000,
	MarcusRosen2006MarkovGaussianLocTime}.


In \cite{lupu2016loop}, Lupu first looked at the case of a BLS on the metric graph, which turns out to be instrumental in numerous circumstances. In particular, Lupu constructed the following coupling between the BLS and the GFF $\varphi$ on the same metric graph: Given a BLS on a metric graph $\wt D$ and its occupation field $\psi$, 
one can obtain a GFF $\varphi$ on $\wt D$ by taking the square root of $\psi$, and then giving i.i.d.\ signs $\pm 1$ to each cluster of the BLS. In this way, the sign clusters of $\varphi$ are exactly the same as the clusters of the BLS.

The GFF's in \cite{MR2815763} and \cite{lupu2016loop} have zero boundary conditions. There is also a version of isomorphism for GFF with positive boundary conditions, due to Sznitman \cite{MR2892408,MR3167123}(also see \cite{MR2932978}), which also works in a general setting, for both the BLS and the RWLS. More precisely, it states that for such a GFF in $D$, its square times $1/2$ is equal to the occupation field of a loop soup in that domain, plus that of a Poisson point process of boundary-to-boundary excursions with an appropriate intensity. 
This version of isomorphism combined with the ideas of the metric graph \cite{lupu2016loop} immediately implies the following coupling (see e.g.\ \cite[Proposition~2.4]{aru2020first}). 
\begin{lemma}\label{lem:iso}
Fix a metric graph $\wt D\subset \mZ$. Let $\varphi$ be a GFF on $\wt D$ with boundary conditions $u(y)\ge 0$ on each $y\in \partial \wt D$. Let $\wt {\mathcal{E}}$ be a Poisson point process of excursions with intensity $\frac12 \sum_{x,y \in \partial \wt D} u(x) u(y) \wt \nu_{x,y}^{\wt D}$.
One can couple $\varphi$ with $(\wt \Lc_{\wt D}, \wt{\mathcal{E}})$ in a way that the occupation field of the latter is equal to $\varphi^2/2$ and the clusters of $\wt\Lc \cup \wt{\mathcal{E}}$ are the same as the sign clusters of $\varphi$.
\end{lemma}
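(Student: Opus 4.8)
The plan is to build the coupling explicitly from the Poissonian side and then verify that the resulting field is a GFF with the prescribed boundary data, reducing the distributional content to Sznitman's isomorphism and reserving the genuinely metric-graph input (continuity of the cable field) for the cluster identification. Concretely, I would sample $\wt\Lc_{\wt D}$ together with the excursion process $\wt{\mathcal{E}}$, form the occupation field $\psi$ of $\wt\Lc_{\wt D}\cup\wt{\mathcal{E}}$, set $|\varphi|:=\sqrt{2\psi}$, and then choose a sign $\sigma_C\in\{\pm1\}$ for each cluster $C$ of $\wt\Lc\cup\wt{\mathcal{E}}$ --- forcing $\sigma_C=+1$ whenever $C$ reaches $\partial\wt D$ (where $u\ge 0$, and where every excursion of $\wt{\mathcal{E}}$ lands), and taking i.i.d.\ uniform signs on the remaining interior clusters. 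Setting $\varphi:=\sigma_C\sqrt{2\psi}$ on each cluster and $\varphi:=0$ elsewhere produces a candidate field whose square is $2\psi$ by construction; it then remains to identify its law and its sign clusters.

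For the occupation-field identity, I would invoke Sznitman's isomorphism for the GFF with boundary condition $u\ge0$: at the level of the vertex set $\Z^2\cap\wt D$ it gives that $\tfrac12\varphi^2$ agrees in law with the occupation field of a loop soup of intensity $1/2$ together with a Poisson family of boundary excursions of the stated intensity $\tfrac12\sum_{x,y}u(x)u(y)\wt\nu^{\wt D}_{x,y}$. To pass from the vertices to the whole cable graph, I would condition on the vertex values: on each edge (an interval of length $1$) the GFF is an independent Brownian bridge between its endpoint values, while the loop-soup-plus-excursion occupation field on that edge is, conditionally, the occupation field of the within-edge crossings and bridges. Lupu's computation matches these two conditional laws (via the identity of the relevant squared-Bessel-bridge laws), so the identity $\psi=\varphi^2/2$ holds as continuous fields on all of $\wt D$, not merely at the vertices. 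This is precisely the metric-graph isomorphism recorded in \cite{lupu2016loop} and \cite[Proposition~2.4]{aru2020first}.

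The heart of the statement is the identification of clusters with sign clusters, and here the metric graph does the work. Since $\varphi$ is a.s.\ continuous on $\wt D$, every connected component of the open set $\{\varphi\neq 0\}$ is, by the intermediate value theorem, a set on which $\varphi$ keeps a constant sign; hence the connected components of $\{\varphi\neq0\}$ are exactly the sign clusters of $\varphi$. On the other hand, the occupation-field identity gives $\{\varphi\neq0\}=\{\psi>0\}$, and $\{\psi>0\}$ is precisely the union of the ranges of the loops and excursions of $\wt\Lc\cup\wt{\mathcal{E}}$; since each such range is a connected subset of $\wt D$ and two loops are connected exactly when their ranges intersect, the connected components of $\{\psi>0\}$ are the clusters of $\wt\Lc\cup\wt{\mathcal{E}}$. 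Comparing the two descriptions shows that clusters and sign clusters coincide.

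The step I expect to be the main obstacle is not the sign argument but the edge-filling underlying the second paragraph: one must check that, conditionally on the vertex configuration, the local-time field of the excursions and loop segments traversing a given edge has the same law as one-half the square of a Brownian bridge with the corresponding endpoint values --- including the delicate case where an endpoint value vanishes, so that the bridge may touch $0$ in the edge's interior and the edge splits into several clusters (the phenomenon that distinguishes the cable graph from $\Z^2$). This is exactly where the metric-graph construction is needed, and it is what guarantees that the zero set of $\varphi$ separates the clusters correctly; I would handle it by citing the squared-Bessel-bridge identities of \cite{lupu2016loop}, and otherwise reduce the remaining bookkeeping --- measurability of the sign assignment, and the matching of the boundary excursions of $\wt{\mathcal{E}}$ with the harmonic part of $\varphi$ --- to Sznitman's isomorphism.
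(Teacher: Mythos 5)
Your proposal is correct and takes essentially the same route as the paper, which gives no standalone proof but derives the lemma directly from Sznitman's isomorphism for positive boundary conditions combined with Lupu's metric-graph coupling (citing \cite[Proposition~2.4]{aru2020first}); your sketch simply fills in the standard ingredients behind that citation (the edge-filling via squared Bessel bridge identities, the sign assignment, and the continuity of the cable-graph field for identifying clusters with sign clusters). The only imprecision worth noting is that the sign should be forced to $+1$ exactly on the clusters containing an excursion of $\wt{\mathcal{E}}$ (equivalently, reaching $\partial \wt D$ where $u>0$), while a cluster of loops whose closure meets the boundary only where $u=0$ must still receive a symmetric random sign.
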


\section{Boundary two-arm event: Proof of (\ref{eq:two-arm})}
In this section, we present the proof of \eqref{eq:two-arm}. We use the isomorphism theorem to reduce the upper bound to an estimate on a certain connection probability of the GFF, which can be deduced from \cite[Corollary 1]{lupu2018random}. Meanwhile, we show the lower bound through the quasi-multiplicativity of arm events from \cite[Proposition 6.3]{GNQ1}.


For any integer $k\in[1,n/2]$, let $\mphi^{k}_n$ be the metric-graph GFF on $\mB_{2n}^+$ with boundary conditions
$$\mphi^{k}_n(z)=\begin{cases}
    1,&z\in l_{3k/2};\\
    0,&z\in \partial(\mB_{2n}^+)\setminus l_{3k/2},
\end{cases}$$
where $l_{3k/2}:=\left[-\lfloor\frac{3k}{2}\rfloor,\lfloor\frac{3k}{2}\rfloor\right]$. 
Then, we have the following up-to-constants estimate for the connection probability of GFF from the segment $l_{3k/2}$ to $(\partial\mB_n)^+$, which will be a key ingredient in the proof of the upper bound of (\ref{eq:two-arm}).
\begin{lemma}\label{le:gff}
For all $1 \leq k\le n/2$,
    $$\mathbb P[l_{3k/2}\connect{\mphi^{k}_n>0}(\partial\mB_n)^+]\asymp\frac kn.$$
\end{lemma}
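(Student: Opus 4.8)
The plan is to establish the estimate $\mathbb P[l_{3k/2}\connect{\mphi^{k}_n>0}(\partial\mB_n)^+]\asymp k/n$ by treating the upper and lower bounds separately, using the connection between the metric-graph GFF and the boundary-to-boundary excursion measure. The key conceptual point is that the positive boundary condition on the segment $l_{3k/2}$, together with the isomorphism of Lemma~\ref{lem:iso}, reduces the sign-cluster connection probability to an effective harmonic/excursion computation: the probability of connecting $l_{3k/2}$ to $(\partial\mB_n)^+$ in the positive phase $\{\mphi^k_n > 0\}$ is comparable to the harmonic measure of $(\partial\mB_n)^+$ seen from the charged segment $l_{3k/2}$, which on the half-plane metric graph scales like $k/n$. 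The cited result I expect to invoke is \cite[Corollary 1]{lupu2018random}, which gives precisely such up-to-constants two-point (or set-to-set) connection estimates for the metric-graph GFF in terms of the Green's function / harmonic measure.

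For the \textbf{upper bound}, I would first use the fact that a sign cluster touching $l_{3k/2}$ and reaching $(\partial\mB_n)^+$ forces a $\{\mphi^k_n>0\}$ crossing of the semi-annulus $\wt A^+_{3k/2,\,n}$, and then bound this crossing probability. The natural route is to apply the Lupu-Werner estimate from \cite[Corollary 1]{lupu2018random} directly to the two-point function between the charged segment and the outer sphere: integrating the boundary-to-boundary estimate over the segment $l_{3k/2}$ (of length $\asymp k$) and the outer boundary $(\partial\mB_n)^+$, and using that in the half-plane the relevant excursion kernel decays so that the total connection mass is of order $k/n$. Concretely, one compares the event to the probability that the excursion cloud $\wt{\mathcal E}$ associated (via Lemma~\ref{lem:iso}) with the unit boundary condition on $l_{3k/2}$ reaches $(\partial\mB_n)^+$; the intensity of $\wt{\mathcal E}$ is $\tfrac12 \sum_{x,y\in l_{3k/2}} \wt\nu^{\wt B^+_{2n}}_{x,y}$, and the expected number of such excursions crossing to distance $n$ is of order $k/n$ by the half-plane excursion estimate, giving the matching upper bound.

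For the \textbf{lower bound}, rather than computing the excursion mass sharply, I would lean on quasi-multiplicativity of the boundary arm events, citing \cite[Proposition 6.3]{GNQ1}. The idea is to produce an explicit connection realizing the event with probability $\gtrsim k/n$ by chaining together $O(\log(n/k))$ scales, each contributing a uniformly positive conditional probability of extending the positive cluster outward by a constant factor, so that the product telescopes to a genuine power law in $k/n$. One must be careful that the boundary condition is supported on $l_{3k/2}$ (slightly larger than $l_k$), which guarantees that near scale $k$ the field is uniformly positive and a macroscopic positive cluster is present with probability bounded below; from there the standard gluing/quasi-multiplicativity argument propagates the crossing out to $(\partial\mB_n)^+$.

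The \emph{main obstacle} I anticipate is making the reduction to \cite[Corollary 1]{lupu2018random} fully rigorous at the level of the segment-to-sphere connection rather than a single two-point function: one has to control the integration of the pointwise estimate over the segment $l_{3k/2}$ and the outer boundary, ensuring the $k/n$ scaling survives and that boundary effects (the Dirichlet condition on the flat part of $\partial\wt B^+_{2n}$ and the interaction between the charged segment and the rest of the boundary) do not distort the exponent. A secondary technical point is reconciling the slightly enlarged segment $l_{3k/2}$ with the annulus $\wt A^+_{k,n}$ so that the comparison constants remain absolute and uniform in $1\le k\le n/2$; this is where the quasi-multiplicativity input is essential for the lower bound, while for the upper bound the enlargement only helps (it can only increase the connection probability, so an upper bound for the $l_{3k/2}$ version still controls the $l_k$ arm event up to constants).
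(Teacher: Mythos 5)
Your overall instinct---reduce the GFF connection probability to a harmonic-type quantity of order $k/n$ via \cite[Corollary 1]{lupu2018random}---points in the right direction, but both halves of the argument have genuine gaps. For the \emph{upper bound}, counting excursions of $\wt{\mathcal E}$ that themselves reach $(\partial\mB_n)^+$ does not control the event: by Lemma~\ref{lem:iso} the connection is realized by a \emph{cluster} of loops of $\wt\Lc$ together with excursions, and the excursions only need to seed that cluster near $l_{3k/2}$ while the loops carry it out to distance $n$ (indeed, in the subsequent proof of the upper bound of \eqref{eq:two-arm} the excursions are used precisely as such local seeds, via a disconnection event near $l_{3k/2}$). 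A first-moment bound on excursions travelling all the way to $(\partial\mB_n)^+$ therefore misses the dominant contribution. Moreover, \cite[Corollary 1]{lupu2018random} is not a set-to-set connection estimate in terms of harmonic measure; it is a first-passage identity for the \emph{drop in effective resistance}, seen from a distant observation point $x_0$, caused by attaching the positive cluster of $l_{3k/2}$. The paper's proof sandwiches the connection event between events of the form $\{R^{\rm eff}(x_0,\cdot)\text{ drops by more than a constant}\}$ and identifies the probability of such a drop with $\Pb[W_t<\E[\mphi^{k}_n(x_0)],\ \forall t\le c]\asymp\E[\mphi^{k}_n(x_0)]\asymp k/n$. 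To make the inclusion ``connection $\Rightarrow$ resistance drop'' uniform (in particular when the cluster reaches $(\partial\mB_n)^+$ near the real axis, where the Green's function to $x_0$ in $\mB_{2n}^+$ degenerates), the paper first dominates $\mphi^k_n$ by a field $\mphi^{k,*}_n$ on an enlarged domain $D^*$ obtained by opening windows in the real axis and attaching the reflected box; this monotonicity step, justified through the coupled loop-soup/excursion representation, is essential and absent from your sketch.

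For the \emph{lower bound}, the multi-scale chaining you propose cannot yield the exponent $1$: if each of the $O(\log(n/k))$ scales is crossed with a uniformly positive conditional probability $c$, the product is of order $(k/n)^{\log_2(1/c)}$, a power law with an uncontrolled exponent. Quasi-multiplicativity (\cite[Proposition 6.3]{GNQ1}) is indeed used in the paper, but only for the lower bound of $\wt\pi_2^+(k,n)$, and in the reverse logical direction: one combines the sharp bound $\Pb[\wt\Bc_{\rm loc}(1,n)]\gtrsim 1/n$ (itself proved by the resistance argument) with the already-established upper bound $\Pb[\wt\Bc_{\rm loc}(1,k)]\lesssim 1/k$. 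For Lemma~\ref{le:gff} itself, the lower bound comes directly from the inclusion $\{R^{\rm eff}(x_0,\partial(\mB_{2n}^+))-R^{\rm eff}(x_0,\widetilde\Lambda_0\cup\partial(\mB_{2n}^+))>c\}\subset\{l_{3k/2}\connect{\mphi^{k}_n>0}(\partial\mB_n)^+\}$ and the same first-passage identity; you need this mechanism, not scale-by-scale gluing.
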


\begin{proof}
For any $x\in\wt{\mathbb Z}^2$ and any nonempty subset $A\subset\wt{\mathbb Z}^2$, let $X$ under $P^x$ denote the Brownian motion started from $x$ on $\wt{\mathbb Z}^2$, and let $H_A$ be the first hitting time of $A$ by $X$. Let $R^{\rm eff}(x,A)$ be the effective resistance from $x$ to $A$ in $\wt{\mathbb Z}^2$ and let $G_A$ denote the Green's function for the Brownian motion $X$ killed upon hitting $A$, then by definition 
\begin{equation}\label{eq:RGnew}
    R^{\rm eff}(x,A)=G_A(x,x).
\end{equation}
The geometric considerations in this proof are illustrated on Figure~\ref{fig:connection_GFF}.

We first deal with the lower bound.
Let $x_0=\frac{3n}2 i$.
Let $\widetilde\Lambda_0$ be the union of clusters in $\{u\in\mB_{2n}^+:\mphi^k_{n}(u)>0\}$ that intersect $l_{3k/2}$.
By (\ref{eq:RGnew}) and \cite[Lemma 2.5]{ding2022crossing}
here exists a constant $c>0$ such that 
    \begin{equation}\label{eq:sandwich}
        \{R^{\rm eff}(x_0,\partial (\mB_{2n}^+))-R^{\rm eff}(x_0,\widetilde\Lambda_0\cup\partial(\mB_{2n}^+))>c\}\subset\{l_{3k/2}\connect{\mphi^{k}_n>0}(\partial\mB_n)^+\}.
    \end{equation}
    Note that
    $$\mathbbm E[\mphi^{k}_n(x_0)]=P^{x_0}[X_{H_{\partial(\mB_{2n}^+)}}\in l_{3k/2}]\asymp\frac kn,$$
    where the asymptotic relation follows from \cite[Proposition 8.1.3]{lawler2010random}. Thus, by \cite[Corollary 1]{lupu2018random}, we have
    \begin{equation}\label{eq:brownian}
        \mathbb P[R^{\rm eff}(x_0,\partial (\mB_{2n}^+))-R^{\rm eff}(x_0,\widetilde\Lambda_0\cup\partial(\mB_{2n}^+))>c]=\mathbb P\left[W_t<\mathbbm E[\mphi^{k}_n(x_0)],\ \forall0\le t\le c\right]\asymp\frac kn,
    \end{equation}
    where $W$ is the one-dimensional standard Brownian motion.
    Hence, combining (\ref{eq:sandwich}) and (\ref{eq:brownian}), we have
    \begin{equation}\label{eq:llower}
        \mathbb P[l_{3k/2}\connect{\mphi^{k}_n>0}(\partial\mB_n)^+]\gtrsim\frac kn.
    \end{equation}

    Next, we turn to the upper bound. In this case, we consider the GFF in a larger region with some different boundary conditions. 
    Specifically, let $\mphi_n^{k,*}$ be the GFF on $D^*:=\mB_{2n}^+\cup\mB_{2n}^-\cup[-9n/8,-7n/8]\cup[7n/8,9n/8]$ with boundary conditions
$$\mphi_n^{k,*}(z)=\begin{cases}
    1,&z\in l_{3k/2};\\
    0,&z\in\partial D^*\setminus l_{3k/2}.
\end{cases}$$
Note that compared with $\mphi_n^{k}$, we do not put any boundary conditions for $\mphi_n^{k,*}$ on the segment $[-9n/8,-7n/8]\cup[7n/8,9n/8]$, and extend the region on with it is defined to contain the reflected box $\mB_{2n}^-$.
Now, we claim that 
\begin{equation}\label{eq:mphi*}
    \Pb[l_{3k/2}\connect{\mphi_n^k>0}(\partial \mB_n)^+]\le\Pb[l_{3k/2}\connect{\mphi_n^{k,*}>0}(\partial \mB_n)^+].
\end{equation}
To see it, we let $\wt\Lc$ be the loop soup in $\mB_{2n}^+$, and $\wt\Lc^*$ be the loop soup in $D^*$, respectively. Note that $\wt\Lc^*$ contains loops crossing the real axis through line segments $[-9n/8,-7n/8]$ and $[7n/8,9n/8]$ and loops staying in $\mB_{2n}^-$, while $\wt\Lc$ does not, and they can be coupled such that $\wt\Lc\subseteq \wt\Lc^*$.
Furthermore, we use $\mathcal E$ and $\mathcal E^*$ to denote the set of boundary-to-boundary excursions on $\mB_{2n}^+$ and $D^*$ with intensity $\frac12 \sum_{x,y \in l_{3k/2}} \wt\nu_{x,y}^{\mB_{2n}^+}$ and $\frac 12 \sum_{x,y \in l_{3k/2}}\wt\nu_{x,y}^{D^*}$ respectively (see \eqref{eq:exc} for a definition), independent of the loop soups $\wt\Lc$ and $\wt\Lc^*$. Then the two excursion clouds can be similarly coupled so that $\mathcal E\subseteq\mathcal E^*$ almost surely. Then, by the isomorphism theorem between the GFF and loop soups (Lemma~\ref{lem:iso}), we can rewrite the connection probability for the GFF in terms of the loop soups together with the excursions:
    \begin{equation}\label{eq:iso}
        \mathbb P[l_{3k/2}\connect{\mphi^{k}_n>0}(\partial\mB_n)^+]=\mathbb P[l_{3k/2}\connect{\wt\Lc\oplus\mathcal E}(\partial\mB_n)^+], 
    \end{equation}
    and 
    \[
    \mathbb P[l_{3k/2}\connect{\mphi^{k,*}_n>0}(\partial\mB_n)^+]=\mathbb P[l_{3k/2}\connect{\wt\Lc^*\oplus\mathcal E^*}(\partial\mB_n)^+],
    \]
    where $\oplus$ stands for the merging of two point processes.
    Therefore, \eqref{eq:mphi*} follows from the equations above, since $\wt\Lc\subseteq \wt\Lc^*$ and $\mathcal E\subseteq\mathcal E^*$.

It remains to bound the RHS of \eqref{eq:mphi*}.
Similarly, we denote by  $\widetilde\Lambda_0^*$ the union of clusters in $\{u\in\mB_{2n}^+:\mphi_n^{k,*}(u)>0\}$ that intersect $l_{3k/2}$.
    In this case, we claim that there exists a constant $c'>0$ such that
    \begin{equation}\label{eq:reff}
        \{l_{3k/2}\connect{\mphi_n^{k,*}>0}(\partial \mB_n)^+\}\subset\{R^{\rm eff}(x_0,\partial D^*)-R^{\rm eff}(x_0,\partial D^*\cup\widetilde\Lambda_0^*)>c'\}.
    \end{equation}
Let us prove \eqref{eq:reff}. 
    By (\ref{eq:RGnew}),
    \begin{equation}\label{eq:RG}\begin{aligned}
        R^{\rm eff}(x_0,\partial D^*)-R^{\rm eff}(x_0,\partial D^*\cup\widetilde\Lambda_0^*)&=G_{\partial D^*}(x_0,x_0)-G_{\partial D^*\cup\widetilde\Lambda_0^*}(x_0,x_0)\\
        &= \sum_{u\in\widetilde\Lambda_0^*\setminus \partial D^*}P^{x_0}[X_{H_{\partial D^*\cup\widetilde\Lambda_0^*}=u}]\cdot G_{\partial D^*}(u,x_0).
    \end{aligned}\end{equation}
    We assume that the event $\{l_{3k/2}\connect{\mphi_n^{k,*}>0}(\partial \mB_n)^+\}$ occurs, and we take an arbitrary point $z\in\widetilde\Lambda_0^*\cap(\partial\mB_n)^+$. Then, it is easy to see that there exist universal constants $c_1,c_2>0$ such that 
    \[
    P^{x_0}[H_{\widetilde\Lambda_0^*\cap\mB_{n/100}(z)}<H_{\partial D^*}]>c_1, \quad \text{and }G_{\partial D^*}(u,x_0)>c_2 \text{ for all } u\in \mB_{n/100}(z).
    \]
    Therefore, on $\{l_{3k/2}\connect{\mphi_n^{k,*}>0}(\partial \mB_n)^+\}$, by (\ref{eq:RG}),
    \begin{equation}\label{eq:Rc}\begin{aligned}
        R^{\rm eff}(x_0,\partial D^*)-R^{\rm eff}(x_0,\partial D^*\cup\widetilde\Lambda_0^*)\ge&\ \sum_{u\in\widetilde\Lambda_0^*\cap\mB_{n/100}(z)}P^{x_0}[X_{H_{\partial D^*\cup\widetilde\Lambda_0^*}=u}]\cdot c_2\\\ge&\  c_2\,P^{x_0}[H_{\widetilde\Lambda_0^*\cap\mB_{n/100}(z)}<H_{\partial D^*}]>c_1c_2,
    \end{aligned}\end{equation}
    yielding (\ref{eq:reff}).
    
    Next, we show the following estimate 
    \begin{equation}\label{eq:*kn}
        \mathbb E[\mphi_n^{k,*}(x_0)]\asymp\frac kn.
    \end{equation}
    Let $l'=(i+l_{3k/2})\cup(-i+l_{3k/2})$. Using the last exit decomposition of the Brownian motion, we deduce that
    $$\mathbb E[\mphi_n^{k,*}(x_0)]=P^{x_0}[X_{H_{\partial D^*}}\in l_{3k/2}]\le\sum_{u\in l'\cap \mathbb Z^2}G_{\partial D^*}(x_0,u),$$
    and for each $u\in l'\cap \Z^2$,
    $$G_{\partial D^*}(x_0,u)\le P^u[H_{\partial \mB_{n/100}(u)}<H_{\mathbb R}]\cdot\sup_{z\in\partial\mB_{n/100}(u)}G_{\partial D^*}(z,x_0)\lesssim \frac 1n,$$
    where in the last inequality, we use $P^u[H_{\partial \mB_{n/100}(u)}<H_{\mathbb R}]\lesssim\frac1n$ from \cite[Lemma 2.6]{ding2022crossing}, and
    $G_{\partial D^*}(z,x_0)\le G_{\partial\mB_{2n}}(z,x_0)\lesssim 1$
    from \cite[Lemmas 2.9 and 2.13]{chelkak2016robust}. Therefore, we conclude \eqref{eq:*kn}.
    Now, combining \eqref{eq:reff} and \eqref{eq:*kn}, and applying \cite[Corollary 1]{lupu2018random} again, we obtain that
    \begin{equation}\label{eq:lupper}
    \begin{aligned}
        \Pb[l_{3k/2}\connect{\mphi_n^{k,*}>0}(\partial \mB_n)^+]\le&\ \Pb[R^{\rm eff}(x_0,\partial D^*)-R^{\rm eff}(x_0,\partial D^*\cup\widetilde\Lambda_0^*)>c']\\
        \le&\ \mathbb P\left[W_t<\mathbbm E[\mphi^{k,*}_n(x_0)],\ \forall 0<t<c'\right]\asymp\frac kn.
    \end{aligned}
    \end{equation}
    This completes the proof of the lemma by \eqref{eq:mphi*}.
\end{proof}

\begin{figure}[h]
\centering
\includegraphics[width=0.8\linewidth]{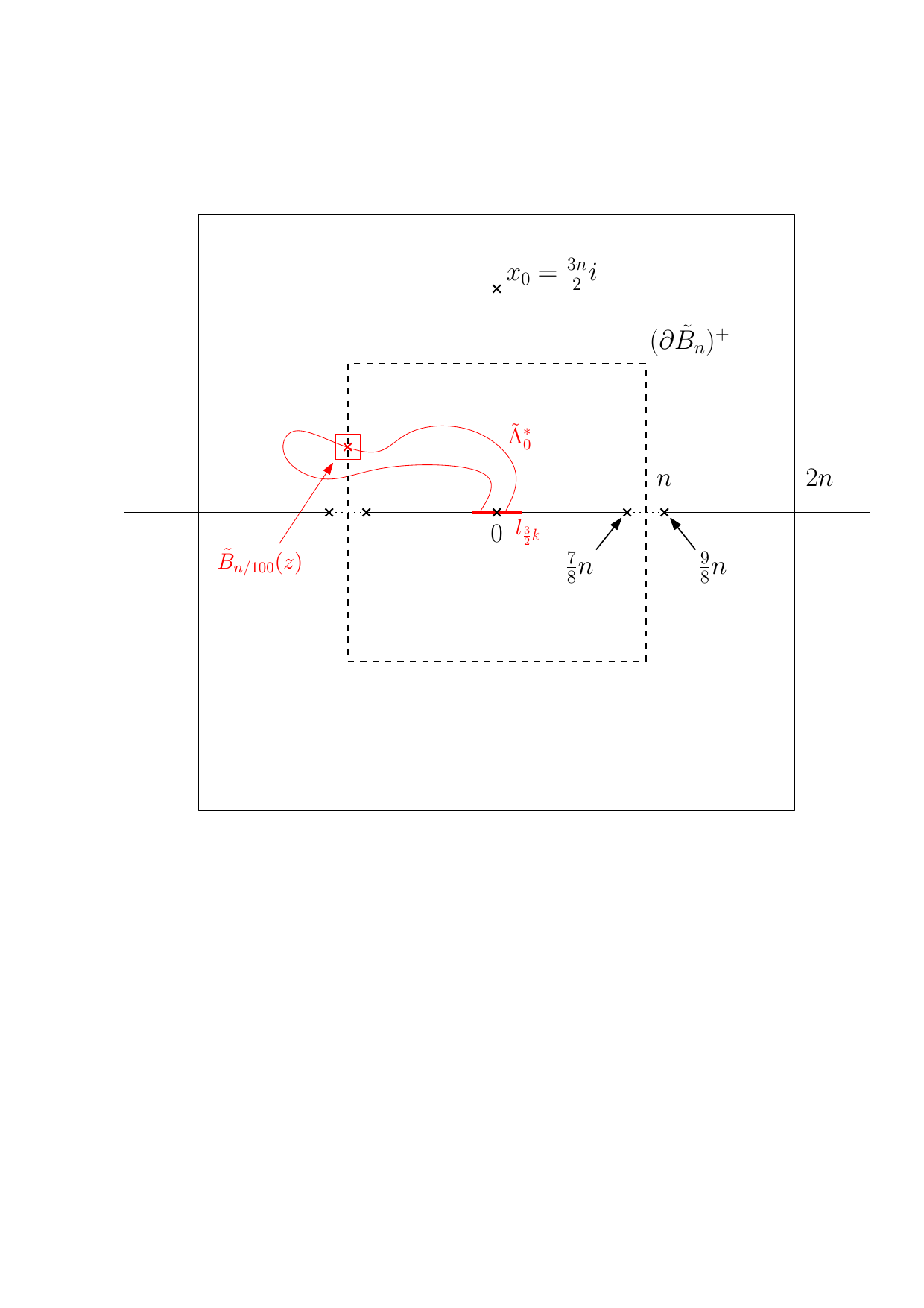}
\caption{\label{fig:connection_GFF}Proof of Lemma~\ref{le:gff}.}
\end{figure}

With the above lemma, we readily obtain the upper bound of (\ref{eq:two-arm}).
\begin{proof}[Proof of the upper bound of (\ref{eq:two-arm})]
    We follow the notation used in the proof of Lemma \ref{le:gff}.
    By \eqref{eq:iso} and Lemma \ref{le:gff}, we obtain that 
    \begin{equation}\label{eq:iso1}
        P[l_{3k/2}\connect{\wt\Lc\oplus\mathcal E}(\partial\mB_n)^+]\asymp\frac k n.
    \end{equation}
    Let $E$ be the event that there exists an excursion in $\mathcal E$ disconnecting $\mB_k^+$ from $(\partial\mB_{n})^+$ inside $\mB_{2n}^+$. Since the total mass of such excursions is bounded away from $0$ uniformly in $k\in [1, n/2]$ and $n$, there is a universal constant $c>0$ such that $\mathbb P[E]>c$. 
    Since the loop soup $\wt\Lc$ and the excursions $\mathcal{E}$ are independent, we have
    \begin{equation}\label{eq:exc-disc}
        \mathbb P[l_{3k/2}\connect{\wt\Lc\oplus\mathcal E}(\partial\mB_n)^+]\ge\mathbb P[E\cap\widetilde{\mathcal B}_{\rm loc}(k,n)]=\mathbb P[E]\cdot\mathbb P[\widetilde{\mathcal B}_{\rm loc}(k,n)]\ge c\,\mathbb P[\widetilde{\mathcal B}_{\rm loc}(k,n)].
    \end{equation}
    It then follows from (\ref{eq:iso1}) that
    \begin{equation}\label{eq:twoarm_upper}
        \mathbb P[\widetilde{\mathcal B}_{\rm loc}(k,n)]\lesssim \frac kn,
    \end{equation}
    which finishes the proof of the upper bound.
    \end{proof}

Now we turn to the lower bound of (\ref{eq:two-arm}). For this, we first derive the corresponding lower bound for $\mathbb P[\mB_{\rm loc}(1,n)]$.

\begin{lemma}\label{lem:low1}
    Let $\mphi_n$ be the metric-graph GFF on $\mB_{2n}^+$ with zero boundary conditions. For all $n\ge 2$,
    \begin{equation}
        \mathbb P[\mB_{\rm loc}(1,n)]\ge\mathbb P[i\connect{\mphi_n>0}(\partial \mB_{n})^+]\gtrsim \frac1n.
    \end{equation}
\end{lemma}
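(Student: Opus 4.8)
The plan is to treat the two inequalities of Lemma~\ref{lem:low1} separately: the first is essentially immediate from the isomorphism, and all the work lies in the $\gtrsim 1/n$ lower bound, which I would obtain by re-running the resistance computation already carried out in the proof of Lemma~\ref{le:gff}, with the vertex $i$ in the role of the ``source''.

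For the first inequality I would invoke Lupu's isomorphism (Lemma~\ref{lem:iso}) with zero boundary conditions. Then the excursion process $\wt{\Ec}$ has zero intensity, so $\wt{\Ec}=\emptyset$ and the sign clusters of $\mphi_n$ coincide with the clusters of $\wt\Lc_{\mB_{2n}^+}$. Since $i=(0,1)\in\partial\mB_1$, on $\{i\connect{\mphi_n>0}(\partial\mB_n)^+\}$ the point $i$ and some point of $(\partial\mB_n)^+$ lie in a common positive sign cluster, hence in a single loop-soup cluster, which therefore crosses $\wt A^+_{1,n}$. Thus $\{i\connect{\mphi_n>0}(\partial\mB_n)^+\}\subseteq\wt\Bc_{\rm loc}(1,n)$, which gives $\mathbb P[\wt\Bc_{\rm loc}(1,n)]\ge\mathbb P[i\connect{\mphi_n>0}(\partial\mB_n)^+]$.

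For the lower bound I would mimic the lower-bound argument of Lemma~\ref{le:gff}, conditioning on the value at $i$. Set $x_0=\tfrac{3n}2 i$ and condition on $\{\mphi_n(i)=a\}$, $a>0$. By the domain Markov property of the metric-graph GFF at the vertex $i$, the conditioned field is the GFF on $\mB_{2n}^+$ with Dirichlet boundary $\partial(\mB_{2n}^+)\cup\{i\}$ and boundary values $0$ on $\partial(\mB_{2n}^+)$ and $a$ at $i$, so Lemma~\ref{lem:iso} applies with $i$ as the source. The analogue of the harmonic-measure estimate of Lemma~\ref{le:gff} is
\[
\mathbb E[\mphi_n(x_0)\mid\mphi_n(i)=a]=a\,P^{x_0}[H_{\{i\}}<H_{\partial(\mB_{2n}^+)}]\asymp\frac an,
\]
where the hitting probability is comparable, up to constants, to the $k=1$ harmonic measure $P^{x_0}[X_{H_{\partial(\mB_{2n}^+)}}\in l_{3/2}]\asymp 1/n$ from \cite[Proposition~8.1.3]{lawler2010random}; its $1/n$ decay crucially reflects that $i$ lies at distance one from the grounded real axis. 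With this estimate, I would run the resistance sandwich and the Lupu--Werner input exactly as in \eqref{eq:sandwich}--\eqref{eq:brownian}. Writing $\wt\Lambda_0$ for the sign cluster of $i$, \cite[Lemma~2.5]{ding2022crossing} yields a constant $c>0$ with
\[
\{R^{\rm eff}(x_0,\partial(\mB_{2n}^+)\cup\{i\})-R^{\rm eff}(x_0,\wt\Lambda_0\cup\partial(\mB_{2n}^+)\cup\{i\})>c\}\subseteq\{i\connect{\mphi_n>0}(\partial\mB_n)^+\}
\]
on $\{\mphi_n(i)=a\}$, since a cluster of $i$ lowering the effective resistance to $x_0$ by a constant must reach near $x_0$, which sits at radius $3n/2>n$, and hence cross $(\partial\mB_n)^+$. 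Applying \cite[Corollary~1]{lupu2018random} as in \eqref{eq:brownian} gives $\mathbb P[i\connect{\mphi_n>0}(\partial\mB_n)^+\mid\mphi_n(i)=a]\gtrsim a/n$ for $a$ of order one. Since $\mphi_n(i)$ is centered Gaussian with variance $G(i,i)\asymp 1$, integrating this conditional bound over $a\in[1,2]$ (an event of probability bounded away from $0$) yields $\mathbb P[i\connect{\mphi_n>0}(\partial\mB_n)^+]\gtrsim 1/n$.

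The main obstacle is the previous paragraph: transporting the positive-boundary Lupu--Werner formula to a cluster emanating from the interior vertex $i$. This requires justifying the Gaussian conditioning/Markov decomposition at a single vertex and, above all, establishing $P^{x_0}[H_{\{i\}}<H_{\partial(\mB_{2n}^+)}]\asymp 1/n$, whose $1/n$ (rather than $O(1)$) size comes from the dipole cancellation created by $i$ lying a bounded distance from the grounded real axis. Everything else is a routine rerun of the proof of Lemma~\ref{le:gff}.
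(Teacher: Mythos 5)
Your proposal is correct and follows essentially the same route as the paper: the first inequality via Lupu's isomorphism, and the $\gtrsim 1/n$ bound by rerunning the effective-resistance and Lupu--Werner argument of Lemma~\ref{le:gff} with the single point $i$ in place of the segment $l_{3k/2}$, using the harmonic-measure estimate of order $1/n$. The only cosmetic difference is that you integrate the conditional bound over $\mphi_n(i)=a\in[1,2]$ directly, whereas the paper conditions on $\{\mphi_n(i)\ge 1\}$ and invokes the monotonicity statement \cite[Lemma 4.2]{ding2022crossing} to reduce to $\{\mphi_n(i)=1\}$; both are valid.
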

\begin{proof}
    By Lemma~\ref{lem:iso}, $\mathbb P[\mB_{\rm loc}(1,n)]$ is equal to the probability there exists a sign (plus or minus) cluster of $\mphi_n$ that crosses $A^+_{1,n}$. Hence,
    the first inequality follows immediately. 
    For the second one, note that 
    \[
    \mathbb P[i\connect{\mphi_n>0}(\partial \mB_{n})^+]\ge 
    \mathbb P[i\connect{\mphi_n>0}(\partial \mB_{n})^+\mid\mphi_n(i)\ge1]\cdot \mathbb P[\mphi_n(i)\ge1] \gtrsim \mathbb P[i\connect{\mphi_n>0}(\partial \mB_{n})^+\mid\mphi_n(i)\ge1].
    \]
    Moreover, by \cite[Lemma 4.2]{ding2022crossing},
    \[
    \mathbb P[i\connect{\mphi_n>0}(\partial \mB_{n})^+\mid\mphi_n(i)\ge1] \ge
    \mathbb P[i\connect{\mphi_n>0}(\partial \mB_{n})^+\mid\mphi_n(i)=1].
    \]
    Using an argument similar to the proof of Lemma \ref{le:gff} (reducing the segment $l_{3k/2}$ to the single boundary point $i$), we obtain that
    $$\mathbb P[i\connect{\mphi_n>0}(\partial \mB_{n})^+\mid\mphi_n(i)=1]\asymp\frac1n.$$
    Combining all the above estimates, we conclude the proof.
\end{proof}
    


\begin{proof}[Proof of the lower bound of (\ref{eq:two-arm})]
By the quasi-multiplicativity of arm events in \cite[Proposition~6.3]{GNQ1}, we have
    \begin{equation}\label{eq:quasi}
        \mathbb P[\mB_{\rm loc}(1,n)]\lesssim \mathbb P[\mB_{\rm loc}(1,k)]\cdot\mathbb P[\mB_{\rm loc}(k,n)]
    \end{equation}
Using Lemma~\ref{lem:low1} and the upper bound of \eqref{eq:two-arm} on the above two sides respectively, we obtain that
\[
\frac1n \lesssim \mathbb P[\mB_{\rm loc}(1,n)]\lesssim \mathbb P[\mB_{\rm loc}(1,k)]\cdot\mathbb P[\mB_{\rm loc}(k,n)] \lesssim \frac1k \cdot \mathbb P[\mB_{\rm loc}(k,n)],
\]
    which yields the desired lower bound.
\end{proof}

\section{Four arm event}

In this section, we complete the proofs of \eqref{eq:four-arm} and \eqref{eq:d-four-arm}, respectively in Sections~\ref{subsec:m_four_arm} and \ref{subsec:d_four_arm}. Before that, in Section~\ref{subsec:toolbox}, we collect useful properties established in earlier works \cite{GNQ1,GNQ3} for four-arm events. In fact, these papers are written for the discrete graph $\Z^2$, but we will also use the corresponding results for the metric graph without proof, since they can readily be obtained by adapting the arguments from the above-mentioned papers.

\subsection{A toolbox}\label{subsec:toolbox}
We first present two useful tools for four-arm events that have been obtained in \cite{GNQ1,GNQ3}, namely, the separation lemma and the quasi-multiplicativity property. Each of these results holds for a general class of setups, by following a proof along similar lines. In particular, they hold for four-arm events in loop soups on discrete graphs, on metric graphs, or even in a continuum domain (i.e.\ the Brownian loop soup). For applications in this paper, we state some versions of these properties for loop soups on the discrete graph and on the metric graph, and we refer the reader to \cite{GNQ1,GNQ3} for more details.

\begin{lemma}[Separation lemma at both sides]\label{lem:sep}
    Let $1\le k\le n/2$. Define the separation event $\Ac_{\loc}^{\rm{sep}}(k,n)$ which satisfies the following two conditions:
    \begin{itemize}
        \item There are only two outermost clusters $\cC_1$ and $\cC_2$ in $\Lc_{B_{2n}}$ crossing  $A_{k,n}$;
        \item $\cC_j \subset (A_{k,n} \cup B_{n/20}(ne^{i(j-1)\pi}) \cup B_{k/20}(ke^{i(j-1)\pi})) \setminus (B_{n/10}(ne^{i j \pi})\cup B_{k/10}(ke^{i j \pi}))$ for both $j=1,2$.
    \end{itemize}   
The separation event on the metric graph is defined similarly and denoted by $\wt\Ac_{\loc}^{\rm{sep}}(k,n)$.
Then, we have 
\[
\Pb[ \Ac_{\loc}^{\rm{sep}}(k,n) ] \asymp \pi_4(k,n), \quad \Pb[ \wt\Ac_{\loc}^{\rm{sep}}(k,n) ] \asymp \wt\pi_4(k,n).
\]
\end{lemma}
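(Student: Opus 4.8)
The easy inclusion is immediate: the separation event $\Ac_{\loc}^{\rm{sep}}(k,n)$ is contained in the four-arm event $\Ac_{\loc}(k,n)$, since it asks for two crossing clusters and hence in particular for at least two. Thus $\Pb[\Ac_{\loc}^{\rm{sep}}(k,n)] \le \pi_4(k,n)$, and the entire content of the lemma is the reverse bound $\pi_4(k,n) \lesssim \Pb[\Ac_{\loc}^{\rm{sep}}(k,n)]$. Equivalently, I must show that conditionally on the four-arm event, the two outermost crossing clusters are already well-separated and localized at both the inner and the outer scale with conditional probability bounded away from $0$, uniformly in $1 \le k \le n/2$. The argument should follow the separation lemmas of \cite{GNQ1,GNQ3} by adaptation to the present loop-soup setting.

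The plan is to enforce separation at the outer scale $n$ and at the inner scale $k$ separately via a conditioning-and-gluing argument in a collar annulus at each scale, and then to combine them. For the outer scale, I would reveal the restriction of $\Lc_{B_{2n}}$ to the core region $B_{n/2}\setminus B_{2k}$, together with the two outermost clusters crossing there on the four-arm event; this leaves the loops living in the outer collar $A_{n/2,2n}$ unexplored and conditionally independent. In that collar I then require a fixed favorable configuration: using RSW-type crossing estimates for the loop soup (available from \cite{GNQ1,GNQ3}) together with the insertion/gluing property of the Poissonian loop soup (the probability of finding, in a fixed annular region, loops realizing a prescribed bounded family of crossings is bounded below), I can with positive conditional probability steer the two clusters so that one lands near angle $0$ and the other near angle $\pi$ on $\partial B_n$, confine each to its target region, and keep each out of the forbidden ball around the opposite landing point. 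Since the collar has bounded scale ratio and only finitely many favorable crossing events are required, this costs a single multiplicative constant. The symmetric construction in the inner collar $A_{k/2,2k}$ handles the inner scale. When $n/k$ is bounded the whole event lives at a single scale and follows from one such favorable configuration; when $n/k$ is large the two collars $A_{n/2,2n}$ and $A_{k/2,2k}$ are disjoint, so the two favorable events are independent given the core, and they steer the inner and outer landings of the \emph{same} fixed clusters $\cC_1,\cC_2$ consistently.

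The main obstacle is twofold and specific to loop soups. First, adding loops is a purely increasing operation, so one cannot ``erase'' a spurious third crossing cluster to enforce the ``exactly two outermost clusters'' requirement; instead I would engineer the favorable collar configuration so that any extra crossing cluster is absorbed into one of the two main clusters (while preserving its localization, which is possible since the two main clusters are allowed to occupy the bulk of $A_{k,n}$), or alternatively rule out three or more crossing clusters by an a priori estimate obtained from the same RSW machinery. Second, the four-arm event is not increasing, as noted in the Introduction, so the proof cannot rely on FKG and must proceed entirely through the explicit local-modification construction above, which is insensitive to monotonicity. Finally, the metric-graph version $\wt\Ac_{\loc}^{\rm{sep}}(k,n)$ is handled by the identical argument: the coupling of Lemma~\ref{lem:iso} transfers the RSW and insertion inputs to $\mZ$, and the gluing construction is carried out verbatim in the collar annuli of $\mZ$.
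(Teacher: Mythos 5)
You correctly isolate the only nontrivial direction ($\pi_4(k,n)\lesssim \Pb[\Ac_{\loc}^{\rm sep}(k,n)]$) and the right inputs (RSW-type crossing estimates and the Poissonian insertion property), but your one-step ``reveal the core, glue in a single collar'' scheme has a genuine gap at exactly the point where separation lemmas are hard. Conditionally on the four-arm event in the core region, the two crossing clusters may emerge on $\partial B_{n/2}$ (resp.\ $\partial B_{2k}$) at essentially the same location; in that case no fixed bounded family of favorable crossings in the collar $A_{n/2,2n}$ can steer them to opposite sides of $\partial B_n$ while keeping them \emph{distinct}, since adding loops can only merge clusters, never keep them apart. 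The conditional probability of your favorable collar event, restricted to configurations where it actually yields two separated outermost crossing clusters, is therefore not obviously bounded below uniformly --- asserting that it is amounts to assuming the separation lemma. The standard proofs (Kesten's scheme, adapted to loop soups in \cite{GNQ1,GNQ3}) instead run a multi-scale induction over dyadic annuli in which a ``quality of separation'' of the arm endpoints is improved scale by scale, and the contribution of persistently ill-separated configurations is shown to be summably small; that iteration is the missing idea. A secondary issue: revealing the loop soup in $B_{n/2}\setminus B_{2k}$ does not leave the collar ``unexplored and conditionally independent,'' because individual loops of $\Lc_{B_{2n}}$ may straddle both regions; the loop-soup arguments must control such loops separately (e.g.\ via the diameter estimate \cite[Lemma~3.2]{GNQ1}).

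For comparison, the paper does not reprove the lemma from scratch: it invokes the outer-side separation result \cite[Theorem~1.2]{GNQ3}, upgrades it to separation at both the inner and outer sides via a continuous version of \cite[Proposition~4.8]{GNQ1}, and notes that the discrete and metric-graph cases follow by the same arguments. A self-contained proof would have to reproduce that multi-scale machinery rather than the single gluing step you describe.
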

\begin{proof}
    A version of this separation lemma at the outer side $n$ for the Brownian loop soup has been proved in \cite[Theorem~1.2]{GNQ3}. In fact, one can generalize this result to the case where separation occurs at both sides, using a continuous version of \cite[Proposition~4.8]{GNQ1}. 
    To obtain this result for the RWLS on a discrete graph, one can follow the same proof as in \cite[Theorem~1.2]{GNQ3}, using results in \cite{GNQ1} as inputs. It is easy to obtain the analogous results for loop soups on the metric graph as well.  
\end{proof}

\begin{lemma}[Quasi-multiplicativity]\label{lem:quasi}
     For all $1\le k\le n/2$, we have	\begin{equation}\label{eq:quasi-1}
	\pi_4(n) \asymp \pi_4(k)\, \pi_4(k,n), \quad \wt\pi_4(n) \asymp \wt\pi_4(k)\, \wt\pi_4(k,n).
	\end{equation}
\end{lemma}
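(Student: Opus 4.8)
The plan is to follow the classical Kesten-type argument for quasi-multiplicativity of arm events, as developed for loop soups in \cite{GNQ1}, taking the separation lemma (Lemma~\ref{lem:sep}) as the central input. Since the loop soup is a Poisson point process, loops confined to disjoint regions form independent families, and this independence across scales substitutes for the spatial independence available in Bernoulli percolation. Both statements in \eqref{eq:quasi-1} are proved in the same way, so I suppress the tildes and argue the discrete case; recall that $\pi_4(n) = \pi_4(1,n)$.

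For the lower bound $\pi_4(n) \gtrsim \pi_4(k)\,\pi_4(k,n)$, I would build a favorable configuration from three ingredients living on essentially disjoint (hence independent) families of loops. By Lemma~\ref{lem:sep}, the separated inner event $\Ac^{\rm sep}_{\loc}(1,k)$ occurs with probability $\asymp \pi_4(k)$ and forces its two crossing clusters to land near $\pm k$ in prescribed, well-separated fairways; likewise the separated outer event $\Ac^{\rm sep}_{\loc}(k,n)$ occurs with probability $\asymp \pi_4(k,n)$, its clusters being confined by the separation to the annular region away from the origin and again landing near $\pm k$ in separated fairways. On an intermediate annulus straddling radius $k$, near the $\pm$ axes, I would add loops joining the inner and outer ``$+$'' arms, and separately the inner and outer ``$-$'' arms; the well-separatedness of the fairways keeps the two resulting clusters disjoint, and they now cross all of $A_{1,n}$. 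This gluing is a local, positive-probability event, independent of the other two, so the three probabilities multiply.

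For the upper bound $\pi_4(n) \lesssim \pi_4(k)\,\pi_4(k,n)$, the existence of two outermost clusters crossing $A_{1,n}$ forces, by restriction to the sub-annuli, the four-arm events across $A_{1,k}$ and across $A_{k,n}$. To turn this containment into a product of probabilities I would decouple at scale $k$: splitting the loops of $\Lc_{B_{2n}}$ into those staying in $B_{2k}$, those staying in the outer annular region, and the remaining loops that span scale $k$, the first two families are independent, and the expected number of spanning loops is $O(1)$, so they can be controlled. Conditioning on a separated configuration of the arms near radius $k$ (legitimate up to a constant factor by Lemma~\ref{lem:sep}) makes the inner and outer events conditionally independent, and the separation guarantees that this conditioning changes each factor only by a bounded amount, yielding the claim after summing over the boundedly many separated landing patterns.

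The main obstacle is the non-monotonicity of the four-arm event flagged in the introduction: ``at least two distinct outermost clusters'' is neither increasing nor decreasing in the loop soup, so FKG and direct stochastic domination are unavailable. The separation lemma is exactly what overcomes this, by confining each arm to a prescribed landing zone so that the gluing (lower bound) and the decoupling (upper bound) become \emph{local}, positive-probability operations on independent loop families. A secondary difficulty, specific to loop soups, is the presence of loops spanning several scales, which can link the inner and outer regions; these are handled via the bounded total mass of loops crossing a fixed annulus, so that they affect the estimates only through constants. Since the separation lemma and these independence properties are available verbatim in the metric-graph setting, the same reasoning gives the $\wt\pi_4$ statement.
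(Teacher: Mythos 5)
Your proposal is correct and matches the paper's approach: the paper likewise obtains the lower bound by gluing separated configurations via Lemma~\ref{lem:sep}, and for the upper bound it simply cites the Kesten-type decoupling argument of \cite[Proposition~6.1]{GNQ1}, which is exactly the argument you sketch. The only difference is one of presentation — you spell out the standard steps that the paper delegates to the references.
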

\begin{proof}
    We focus on the discrete estimate as the metric-graph one follows similarly. The upper bound $\pi_4(n) \lesssim \pi_4(k)\, \pi_4(k,n)$ has been shown in \cite[Proposition~6.1]{GNQ1}. The lower bound now follows from Lemma~\ref{lem:sep} immediately by basic geometric constructions.
\end{proof}

\begin{lemma}\label{lem:qua1}
    For all $\delta_1\in (0,1]$ and $\delta_2\ge 1$, there exists $c\in (0,1)$ such that for all $1\le k \le n/2$,
    \[
    c\,\pi_4(k,n)\le  \pi_4(\delta_1 k,\delta_2 n) \le c^{-1}\pi_4(k,n).
    \]
\end{lemma}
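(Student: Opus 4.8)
The plan is to deduce the statement from quasi-multiplicativity (Lemma~\ref{lem:quasi}), together with the fact that the four-arm probability across an annulus of \emph{bounded modulus} is comparable to a constant uniformly in the scale. First I would upgrade the two-radius quasi-multiplicativity of Lemma~\ref{lem:quasi} to its general multi-radius form: for all $1\le r_1\le r_2\le r_3$,
\[
\pi_4(r_1,r_3)\asymp \pi_4(r_1,r_2)\,\pi_4(r_2,r_3).
\]
This follows by expanding each $\pi_4(1,r_i)$ through \eqref{eq:quasi-1} as $\pi_4(1,r_j)\,\pi_4(r_j,r_i)$ and cancelling the common factor $\pi_4(1,r_1)$; the same reasoning applies verbatim on the metric graph.

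Applying this general form twice to the four radii $\delta_1 k\le k\le n\le \delta_2 n$ (which are ordered since $\delta_1\le 1\le \delta_2$) yields, with constants independent of $k$ and $n$,
\[
\pi_4(\delta_1 k,\delta_2 n)\asymp \pi_4(\delta_1 k,k)\,\pi_4(k,n)\,\pi_4(n,\delta_2 n).
\]
The upper bound $\pi_4(\delta_1 k,\delta_2 n)\lesssim \pi_4(k,n)$ is then immediate, since the two boundary factors $\pi_4(\delta_1 k,k)$ and $\pi_4(n,\delta_2 n)$ are each at most $1$. For the matching lower bound it remains to show that these two factors are bounded below by a positive constant depending only on $\delta_1$ and $\delta_2$. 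Each is a four-arm probability across an annulus of fixed modulus ($1/\delta_1$ and $\delta_2$, respectively), and I would bound it below uniformly in the scale by an explicit construction: on two opposite sides of the annulus one forces two disjoint ``tubes'' of loops, each joining the inner to the outer boundary, producing two distinct outermost crossing clusters with probability bounded away from $0$; the separation lemma (Lemma~\ref{lem:sep}) then guarantees that such well-separated crossings indeed realize the four-arm event up to constants. These bounded-modulus crossing estimates belong to the standard toolbox of \cite{GNQ1,GNQ3} and hold uniformly across scales.

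Two bookkeeping points need care. When $\delta_1 k<1$ the inner annulus degenerates, but in that regime $k\le 1/\delta_1$ is bounded, so $\pi_4(\delta_1 k,k)$ is read as $\pi_4(1,k)\asymp 1$ and the decomposition still closes; similarly, for $n$ below a threshold $n_0(\delta_1,\delta_2)$ all radii are bounded, leaving only finitely many nondegenerate configurations (recall $k\le n/2$ keeps every annulus crossable), each with $\pi_4\in(0,1]$, so the comparison holds with a constant on this finite range. The hard part will be precisely the uniform-in-scale lower bounds on $\pi_4(\delta_1 k,k)$ and $\pi_4(n,\delta_2 n)$: unlike the upper bound these are not formal, and this is where one must invoke an explicit two-cluster construction—robust across the discrete, metric-graph, and continuum settings—combined with the separation lemma.
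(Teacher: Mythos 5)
Your reduction is the same as the paper's: both arguments use quasi-multiplicativity (Lemma~\ref{lem:quasi}) to peel off the factors $\pi_4(\delta_1 k,k)$ and $\pi_4(n,\delta_2 n)$, observe that the upper bound is then free, and reduce the lower bound to showing that the four-arm probability across an annulus of bounded modulus is bounded below uniformly in the scale. Where you diverge is in how that last positivity is obtained. The paper does not construct anything: it notes that $\pi_4(n,\delta_2 n)$ converges as $n\to\infty$ to a four-arm probability for the continuum Brownian loop soup (\cite[(6.16)]{GNQ1}), which is strictly positive by \cite[Theorem~1.1]{GNQ3}; combined with positivity at each fixed $n$ this gives the uniform lower bound. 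Your route buys independence from the continuum convergence result, but at the price of having to carry out a genuine construction.

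And that construction, as sketched, has a gap at exactly the point you flag as the hard part. Forcing two disjoint tubes of loops crossing the annulus is an \emph{increasing} event, but $\Ac_{\loc}$ also requires the two crossings to lie in \emph{distinct} outermost clusters, which is a decreasing requirement --- this non-monotonicity is precisely why the paper cannot deduce \eqref{eq:d-four-arm} from \eqref{eq:four-arm} by domination. Nothing in your sketch prevents the remaining loops of the soup from merging the two tubes into a single cluster, and the separation lemma (Lemma~\ref{lem:sep}) does not help here: it compares the four-arm probability with its well-separated version, but it cannot manufacture two distinct clusters out of two forced crossings. The fix is the one the paper uses in its proof of \eqref{eq:A'}: intersect with the independent event that all clusters of the loop soup outside the prescribed corridors have diameter much smaller than the separation between the corridors (uniformly positive probability by \cite[Lemma~3.2]{GNQ1}), which confines the cluster of each tube to a small neighbourhood of its corridor and forces the two clusters to be distinct and outermost. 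With that ingredient added your argument closes; without it, the claimed lower bound on $\pi_4(n,\delta_2 n)$ is not established.
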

\begin{proof}
    By Lemmas~\ref{lem:sep} and~\ref{lem:quasi}, we only need to show $\pi_4(n,\delta_2 n)\ge c(\delta_2)>0$. Note that $\pi_4(n,\delta_2 n)$ converges to some four-arm probability for the Brownian loop soup as $n\to\infty$ (see e.g.\ \cite[(6.16)]{GNQ1}), which is positive by \cite[Theorem~1.1]{GNQ3}. This gives the result.
\end{proof}

\begin{remark}
    Note that all the above results, i.e. Lemmas~\ref{lem:sep}, ~\ref{lem:quasi} and~\ref{lem:qua1}, are not specific to the critical intensity $\frac12$: they hold for four-arm events in loop soups with any intensity in  $(0,\frac12]$.
\end{remark}

\subsection{Proof of \eqref{eq:four-arm}}\label{subsec:m_four_arm}
In order to prove \eqref{eq:four-arm}, it is enough to prove the following estimate for $\wt\pi_4(n)=\wt\pi_4(1,n)$.
\begin{proposition}\label{prop:k=1}
    We have
    \begin{equation}\label{eq:four-arm1}
        \wt\pi_4(n)\asymp n^{-2},
    \end{equation}
    where the implied constants do not rely on $n$.
\end{proposition}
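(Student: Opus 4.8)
The plan is to establish the two-sided bound $\wt\pi_4(n) \asymp n^{-2}$ for the four-arm event with inner radius $1$, since by the quasi-multiplicativity of Lemma~\ref{lem:quasi} (in the form $\wt\pi_4(n) \asymp \wt\pi_4(k)\,\wt\pi_4(k,n)$) together with Lemma~\ref{lem:qua1}, the general statement \eqref{eq:four-arm} follows: writing $\wt\pi_4(k,n) \asymp \wt\pi_4(n)/\wt\pi_4(k) \asymp (k/n)^2$. So the whole problem reduces to pinning down the exponent $2$ at the origin-scale. The strategy I would follow mirrors the boundary two-arm proof in Section~3: reduce the four-arm event to a connection event for the metric-graph GFF via the isomorphism of Lemma~\ref{lem:iso}, and then invoke the Lupu--Werner estimate \cite[Corollary~1]{lupu2018random} to convert effective-resistance differences into one-dimensional Brownian motion survival probabilities.

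For the upper bound, I would first use the separation lemma (Lemma~\ref{lem:sep}) to replace $\wt\pi_4(n)$ by the probability of the separated event $\wt\Ac_{\loc}^{\rm sep}(1,n)$, in which there are exactly two well-separated outermost clusters crossing $\wt A_{1,n}$, landing in disjoint regions near opposite endpoints of the annulus. Each such crossing cluster is a sign cluster of the GFF (one $+$ cluster and, say, one $-$ cluster, or two of the same sign that are forced apart), so the event translates into two \emph{simultaneous} and spatially separated connections $\{0 \connect{\varphi>0}\partial\wt B_n\}$-type events. The key point is that the four-arm exponent $2$ should appear as the \emph{square} of the one-arm-to-radius-$n$ decay: each of the two arms contributes a factor $\asymp 1/n$ (exactly the order produced by the $\E[\mphi(x_0)] \asymp 1/n$ computation and the Brownian survival probability $\Pb[W_t < \E[\mphi(x_0)],\, \forall\, 0 \le t \le c] \asymp 1/n$ from Lemma~\ref{le:gff}), and the separation/independence of the two arms lets one multiply these, yielding $n^{-2}$. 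Concretely, I would set up the GFF with boundary value $1$ forced on small neighborhoods of the two landing regions (as in the $\mphi_n^{k,*}$ construction) and bound the connection probability of each cluster to $\partial \wt B_n$ by the corresponding resistance-difference event, then apply \cite[Corollary~1]{lupu2018random} to each.

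For the lower bound I would run the converse construction: exhibit an explicit configuration of two separated crossing clusters by conditioning the GFF to take value $\ge 1$ at two well-chosen points near $0$ and using the FKG-type / monotonicity inputs (as in Lemma~\ref{lem:low1}, via \cite[Lemma~4.2]{ding2022crossing}) together with the lower-bound half of the Lupu--Werner estimate to guarantee each arm connects to $\partial \wt B_n$ with probability $\gtrsim 1/n$; independence or a suitable domain-Markov decomposition across a dyadic annulus near the origin then gives the product $\gtrsim n^{-2}$, and the separation lemma upgrades this configuration back to $\wt\pi_4(n)$.

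The main obstacle, as I see it, is the \emph{coupling of the two arms} and the correct accounting of signs. Unlike the single boundary arm in Section~3, here one genuinely needs two crossing clusters, and on the metric graph these correspond to sign clusters of a \emph{single} GFF, so the two $1/n$ factors are not a priori independent; the Lupu--Werner corollary as stated controls one effective-resistance difference at a time. The delicate step will be to decouple the two arms --- either by a careful choice of the auxiliary domain and boundary data (placing the forced-value segments on opposite sides and exploiting the separation regions of Lemma~\ref{lem:sep} to make the two connection events depend on disjoint portions of the field), or by a two-step conditioning that peels off one arm and then applies the estimate to the second in the remaining domain --- so that the product structure giving the exponent $2$ is rigorously justified rather than merely heuristic.
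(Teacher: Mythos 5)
Your reduction to $k=1$ via quasi-multiplicativity, and your intention to use the separation lemma and the Lupu--Werner estimate, are all consistent with the paper; but the core of your argument rests on a claim that is false, namely that each of the two crossing clusters ``contributes a factor $\asymp 1/n$'' analogous to the $\E[\mphi^{k}_n(x_0)]\asymp k/n$ computation of Lemma~\ref{le:gff}. That $1/n$ is a \emph{boundary} estimate --- the harmonic measure of a short segment of $\partial(\mB_{2n}^+)$ seen from distance $n$ --- and it has no bulk analogue: the probability that a single bulk point is joined to distance $n$ by a sign cluster of the metric-graph GFF is of a completely different (much larger) order, and the four-arm exponent $2$ is not twice a one-arm exponent. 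The genuine difficulty, which you flag as the ``main obstacle'' but do not resolve, is that requiring two \emph{distinct} outermost clusters to cross $\wt A_{1,n}$ is precisely what forces the probability down to $n^{-2}$; a product of two one-arm events would generically produce a single cluster, and neither FKG nor a naive domain-Markov decoupling across an annulus near the origin will manufacture two disjoint clusters with the claimed product probability. So as written the proposal cannot be completed.

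The paper's proof avoids any such factorization and is instead a counting argument. One sums a localized, separated four-arm probability over the $\asymp n^2$ points $z$ of an annulus, so that $\E[N']\asymp n^2\,\wt\pi_4(n)$ by locality, Lemma~\ref{lem:sep} and Lemma~\ref{lem:qua1}; one then shows $\E[N']\asymp 1$ by conditioning on the full cluster $\Lambda_n$ of $\mB_{n/8}$ (which is macroscopic with uniformly positive probability, Lemma~\ref{lem:cluster}) and proving in Lemma~\ref{lem:NG} that the expected number of points of $\Gamma_n$ connected to $\partial\mB_{n/2}$ by the remaining loop soup is comparable to the total harmonic measure of $\partial\Lambda_n$, which is of order $1$. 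The Lupu--Werner estimate enters only in this conditional ``second arm'' step, converting a point-to-set connection probability in the slit domain $\mB_n\setminus\Lambda_n$ into the harmonic measure of a boundary point --- a quantity that is $\asymp 1/n$ only on average over the $\asymp n$ boundary points, not pointwise and not as an unconditional one-arm probability. The factor $n^{-2}$ thus emerges from averaging over $n^2$ locations against a total mass of order $1$, not from multiplying two arm probabilities; this averaging mechanism is entirely absent from your proposal.
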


\begin{proof}[Proof of \eqref{eq:four-arm} given Proposition~\ref{prop:k=1}]
    We obtain \eqref{eq:four-arm} directly by combining Proposition~\ref{prop:k=1} and Lemma~\ref{lem:quasi}.  
\end{proof}

Before diving into the proof of Proposition~\ref{prop:k=1}, we collect some basic results on the loop soups.
Recall that $\Lc_k$ is the random walk loop soup on the discrete box $B_k$ with intensity $\frac12$, which can be obtained from the continuous loops in $\wt\Lc_k=\wt\Lc_{\wt B_k}$ by taking the print of the latter on $B_k$ (see e.g.\ \cite[Section~2]{lupu2016loop}). The next lemma shows that there exist surrounding loops in annuli with uniformly positive probability for both loop soups.

\begin{lemma}\label{lem:loop}
Let $H_{k}$ be the event that there exists a loop in $\Lc_k$ that surrounds $B_{k/2}$,
and let $\wt H_{k}$ be the corresponding event for $\wt\Lc_k$. There is $c>0$ such that for all $k\ge 1$, 
    \[
    \Pb[H_{k}]\ge \Pb[\wt H_{k}]>c.
    \]
\end{lemma}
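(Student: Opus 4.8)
The plan is to treat the two assertions separately: the stochastic comparison $\Pb[H_k]\ge \Pb[\wt H_k]$ via the print coupling between the two loop soups, and the uniform lower bound $\Pb[\wt H_k]>c$ via the Poisson structure of the soup together with a lower bound on the Brownian loop measure of surrounding loops.

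For the inequality, I would use the coupling recalled above, under which $\Lc_k$ is the print on $B_k$ of the metric-graph soup $\wt\Lc_k$ (see \cite{lupu2016loop}). It suffices to check that this coupling maps $\wt H_k$ into $H_k$. Indeed, fix a continuous loop $\wt\gamma\in\wt\Lc_k$ surrounding $B_{k/2}$, and let $\gamma\in\Lc_k$ be its print, a nearest-neighbor loop in $B_k$. Viewing $\gamma$ as a closed polygonal curve in $\R^2$ (joining consecutive visited vertices along lattice edges), it differs from $\wt\gamma$ only by the excursions of $\wt\gamma$ into edges that it does not fully traverse; such excursions are contractible and do not change the winding number around any point of $\mathrm{int}(B_{k/2})$. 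Hence $\gamma$ has nonzero winding around $B_{k/2}$, i.e.\ $\gamma$ surrounds $B_{k/2}$, so $\wt H_k\subseteq H_k$ under the coupling and therefore $\Pb[H_k]\ge\Pb[\wt H_k]$.

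For the lower bound, let $\Gamma_k$ be the set of loops that stay in the metric-graph annulus $\wt A_{k/2,k}$ and surround $B_{k/2}$. Since such loops lie in $\wt B_k$ and force $\wt H_k$, and since $\wt\Lc_k$ is a Poisson point process with intensity $\tfrac12\wt\nu$, we have
\[
\Pb[\wt H_k]\ \ge\ 1-\exp\!\big(-\tfrac12\,\wt\nu(\Gamma_k)\big),
\]
so it is enough to prove $\inf_{k\ge 1}\wt\nu(\Gamma_k)>0$. I would split this into two regimes. For $k$ in any fixed finite range $1\le k\le K_0$, the annulus is wide enough to contain surrounding loops and the Brownian loop measure assigns them positive mass, so $\min_{1\le k\le K_0}\wt\nu(\Gamma_k)>0$. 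For large $k$, I would use Brownian scaling on the metric graph: rescaling space by $1/k$ and time by $k^2$, the metric-graph Brownian motion converges to planar Brownian motion, and correspondingly the restriction of $\wt\nu$ to macroscopic loops converges to the (scale-invariant) continuum Brownian loop measure. Since the continuum measure of loops surrounding $[-\tfrac12,\tfrac12]^2$ inside $[-1,1]^2\setminus[-\tfrac12,\tfrac12]^2$ is a positive constant $c_\infty$, and $\Gamma_k$ is an open family of macroscopic loops approximating this continuum event, a lower-semicontinuity (portmanteau) argument gives $\liminf_{k\to\infty}\wt\nu(\Gamma_k)\ge c_\infty>0$. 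Combining the two regimes yields $\inf_{k\ge1}\wt\nu(\Gamma_k)>0$, hence $\Pb[\wt H_k]>c$ uniformly.

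The main obstacle is this last uniform lower bound, and in particular making precise the convergence of the metric-graph loop measure to the continuum loop measure and the associated lower semicontinuity. To avoid invoking full loop-soup convergence, I would instead argue directly from the definition $\wt\nu(\wt\gamma)=\int_{\mZ}\int_0^\infty \mathbb P^t_{z,z}(\wt\gamma)\,p_t(z,z)\,\tfrac{dt}{t}\,dz$: rooting at a point $z_0$ with $\dist(z_0,0)\asymp 3k/4$ and restricting to durations $t\asymp k^2$, one lower bounds the bridge probability that the path winds once around the origin while staying in $\wt A_{k/2,k}$ by a constant, uniformly in $k$, by Brownian scaling and the two-dimensional return estimate $p_t(z_0,z_0)\asymp 1/t$ valid at these scales. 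Integrating the factor $p_t(z_0,z_0)\,\tfrac{dt}{t}\,dz$ over this range produces a $k$-independent lower bound on $\wt\nu(\Gamma_k)$, which is the required estimate.
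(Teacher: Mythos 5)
Your proposal is correct, and the first half (print of a surrounding metric-graph loop still surrounds, hence $\Pb[H_k]\ge\Pb[\wt H_k]$ under the coupling of \cite{lupu2016loop}) is exactly what the paper does, just with the winding-number justification spelled out. For the uniform lower bound $\Pb[\wt H_k]>c$ you take a genuinely different route: you Poissonize, reducing the claim to $\inf_{k\ge1}\wt\nu(\Gamma_k)>0$ for the family of loops confined to $\wt A_{k/2,k}$ that surround $B_{k/2}$, and then bound this intensity either by convergence of the rescaled loop measure or by a direct computation from the definition of $\wt\nu$ (rooting at scale $3k/4$, durations $t\asymp k^2$, $p_t(z,z)\asymp 1/t$, and a uniform bridge-winding estimate). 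The paper instead works directly at the level of the event probability: it invokes \cite[Theorem~1.1]{lawler2007random} to get $\lim_k\Pb[\wt H_k]=\lim_k\Pb[H_k]=a$, where $a$ is the corresponding probability for the continuum Brownian loop soup in $\D_1$, and then positivity $a>0$ from \cite[Proposition~3.9]{lawler2011defining}; small $k$ are handled trivially. Your approach buys self-containedness (no appeal to the loop-soup scaling limit, and the Poisson bound $1-e^{-\wt\nu(\Gamma_k)/2}$ is clean), but the price is that the technical inputs you assert — the on-diagonal heat-kernel asymptotics for the metric-graph Brownian motion at times $t\asymp k^2$, the uniform lower bound on the bridge winding probability, or alternatively the metric-graph analogue of the loop-measure convergence together with the lower-semicontinuity step — are each roughly as much work as the citations the paper uses, so they would need to be carried out (or sourced) to make the argument complete. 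These are standard facts, so I see no genuine gap, only unproved but true intermediate claims.
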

\begin{proof}
    The first inequality holds by definition.
    Hence, it suffices to show that $\Pb[\wt H_k]> c$ when $k$ is sufficiently large (the probability is uniformly positive for all small $k$). Now, by \cite[Theorem~1.1]{lawler2007random}, we have $\lim_{k\to\infty}\Pb[\wt H_k]=\lim_{k\to\infty}\Pb[ H_k]=a$, where $a$ is the probability that there exists a loop in the Brownian loop soup in $\D_{1}$ with intensity $\frac12$ that surrounds $\D_{1/2}$. Here, $\D_r:=\{x\in\R^2: \|x\|_\infty\le r\}$. Finally, we have $a>0$ by \cite[Proposition 3.9]{lawler2011defining}, which implies that $\Pb[\wt H_k]\ge \frac a2$ for sufficiently large $k$. This finishes the proof.
\end{proof}

Of course, one can replace the box $B_{k/2}$ in the statement of Lemma~\ref{lem:loop} by $B_{\delta k}$ for any $\delta\in (0,1)$ such that the lower bound $c$ depends only on $\delta$.
Next, we use it to construct some clusters in need later. Here and below, the \emph{outer boundary} of a connected set $\wt A$ in $\mZ$, denoted by $\partial_\out \wt A$, refers to the connected component of $\partial \wt A$ that can be connected to infinity in $\wt A^c$.

\begin{lemma}\label{lem:cluster}
    Let $F_n$ be the event that there exists a cluster of $\Lc_{n}$ that intersects $B_{n/8}$ with outer boundary contained in $A_{n/8,n/4}$, and let $\wt F_n$ be the corresponding event for $\wt\Lc_n$. Then, there is a universal constant $c$ such that $\min\{\Pb[F_n],\Pb[\wt F_n]\}>c$.
\end{lemma}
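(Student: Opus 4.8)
The plan is to build the desired cluster by combining a few surrounding loops in nested annuli, using Lemma~\ref{lem:loop} as the engine. Since the first inequality $\Pb[F_n] \geq \Pb[\wt F_n]$ follows by projection (the print of $\wt\Lc_n$ on $B_n$ is $\Lc_n$, and a cluster in $\wt\Lc_n$ yields a cluster in $\Lc_n$ with comparable geometric position), I would focus on the metric-graph event $\wt F_n$ and establish $\Pb[\wt F_n] > c$ for a universal $c>0$. The key point to arrange is a cluster that genuinely \emph{reaches into} $\wt B_{n/8}$ yet has its outer boundary trapped inside the thin annulus $\wt A_{n/8,n/4}$.

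First I would apply Lemma~\ref{lem:loop} (in the rescaled form noted in the remark after it, replacing $B_{k/2}$ by $B_{\delta k}$) to produce, with uniformly positive probability, a loop $\wt\gamma_1 \in \wt\Lc_n$ that lives inside the annulus $\wt A_{n/8,n/4}$ and surrounds $\wt B_{n/8}$. This loop is a natural candidate to \emph{become} the outer boundary of the cluster, so its trace must be confined to $\wt A_{n/8,n/4}$; I would choose the radii in Lemma~\ref{lem:loop} (e.g.\ a surrounding loop of $\wt B_{3n/16}$ living in $\wt A_{n/8,n/4}$) so that the surrounding loop does not protrude outside $\wt A_{n/8,n/4}$. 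Second, independently, I would use Lemma~\ref{lem:loop} again at a smaller scale to produce a loop $\wt\gamma_2$ that enters $\wt B_{n/8}$ (for instance, a loop surrounding $\wt B_{n/16}$ and living inside $\wt B_{n/8}$), guaranteeing that the eventual cluster genuinely intersects $\wt B_{n/8}$. Since $\wt\Lc_n$ is a Poisson point process, the configurations of loops in disjoint annular regions are independent, so the joint occurrence of these two events (plus a connecting event below) has probability bounded below by the product of the individual uniformly-positive probabilities.

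The remaining step is to \emph{connect} $\wt\gamma_1$ and $\wt\gamma_2$ into a single cluster while keeping the outer boundary of that cluster inside $\wt A_{n/8,n/4}$. I would insert one further surrounding loop (or a short chain of loops) in the intermediate annulus between the scales of $\wt\gamma_2$ and $\wt\gamma_1$, again via Lemma~\ref{lem:loop}, chosen so that it overlaps both $\wt\gamma_1$ and $\wt\gamma_2$; on the metric graph, two loops whose traces meet are automatically in the same cluster, so overlapping nested surrounding loops at comparable scales will link up. On the intersection of all these (finitely many, independent) uniformly-positive-probability events, the union of the loops forms a single cluster that touches $\wt B_{n/8}$ and whose outermost loop $\wt\gamma_1$ is contained in $\wt A_{n/8,n/4}$; since the cluster does not extend beyond $\wt\gamma_1$, its outer boundary $\partial_\out$ lies in $\wt A_{n/8,n/4}$, which is exactly the event $\wt F_n$.

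The main obstacle I anticipate is the control of the outer boundary: producing \emph{a} cluster reaching $\wt B_{n/8}$ is easy, but ensuring its \emph{outer boundary} stays within the prescribed annulus requires that no loop of the constructed cluster, nor any loop that might attach to it, escapes past radius $n/4$. To handle this cleanly I would work on a slightly larger ambient annulus and impose, again with uniformly positive probability, a ``shielding'' configuration — e.g.\ the event that the surrounding loop $\wt\gamma_1$ is the outermost loop of its cluster, which one can enforce by conditioning only on loops contained in $\wt B_{n/4}$ and arguing that the relevant construction is measurable with respect to these. The independence afforded by the Poissonian structure of the loop soup is what makes all the pieces multiply, so the quantitative bound $c>0$ is universal in $n$, completing the proof.
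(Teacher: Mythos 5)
Your overall strategy---assembling the desired cluster from a few surrounding/crossing loops supplied by Lemma~\ref{lem:loop} and multiplying uniformly positive probabilities via the Poissonian independence of loops in disjoint classes---is the same as the paper's (which uses just two loops: a loop of $\Lc_{n/7}$ surrounding $B_{n/8}$ and a loop of $\Lc_{n/6}$ crossing $A_{n/8,n/7}$; the latter must intersect the former for topological reasons and touches $\partial B_{n/8}$). The genuine gap is in the step you yourself flag as the main obstacle but do not close: you must rule out that some \emph{other} loop of the full soup $\Lc_n$ attaches to the constructed cluster and drags its outer boundary outside $B_{n/4}$. Your proposed ``shielding''---asking that $\wt\gamma_1$ be the outermost loop of its cluster, enforced ``by conditioning only on loops contained in $\wt B_{n/4}$''---does not achieve this: the dangerous loops are precisely those \emph{not} contained in $\wt B_{n/4}$ which nevertheless enter $B_{n/4}$ and can touch your cluster, and restricting attention to loops inside $\wt B_{n/4}$ says nothing about them. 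Moreover, the event you would need is a decreasing event in the loop soup, so it is neither independent of, nor (via FKG) positively correlated with, the increasing events of your construction; it cannot simply be multiplied in.

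The paper closes this gap with a third, genuinely independent event $E_3(n)$: all clusters of $\Lc_n\setminus\Lc_{n/6}$ (loops not contained in $B_{n/6}$) have diameter smaller than $n/12$. This event depends on a set of loops disjoint from those used in the construction, so exact independence holds, and on it any exterior cluster attaching to the constructed cluster (which lies in $B_{n/6}$) extends at most to $B_{n/6+n/12}=B_{n/4}$. Crucially, the uniform lower bound on $\Pb[E_3(n)]$ is a nontrivial quantitative input, namely \cite[Lemma~3.2]{GNQ1}; without this ingredient (or an equivalent) the proof is incomplete. Two smaller points: Lemma~\ref{lem:loop} produces a loop in a box surrounding a sub-box, not a loop confined to an annulus, so you cannot directly ask $\wt\gamma_1$ to ``live inside $\wt A_{n/8,n/4}$''---fortunately this is not needed, since once the cluster contains a circuit around $B_{n/8}$ its outer boundary automatically avoids the interior of $B_{n/8}$. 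Also, deducing $\Pb[F_n]\ge\Pb[\wt F_n]$ ``by projection'' is delicate because a metric-graph cluster may split into several discrete clusters; the paper sidesteps this by running the same argument separately in the two settings.
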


\begin{proof}
    In the proof, we view $\Lc_r$ as $\Lc_{n}$ restricted to loops staying inside $ B_r$ when $r<n$. 
    For the lower bound of $\Pb[ F_n]$, we define the following three events
    \begin{itemize}
        \item $ E_1(n)$: there exists a loop of $\Lc_{n/7}$ that  surrounds $ B_{n/8}$;
        \item $ E_2(n)$: there exists a loop of $\Lc_{n/6}$ that crosses $ A_{n/8,n/7}$;
        \item $ E_3(n)$: all clusters in $\Lc_n\setminus \Lc_{n/6}$ have diameter smaller than $n/12$.
    \end{itemize}
    Note that the above three events are independent (involving disjoint sets of loops), and $E_1(n)\cap  E_2(n) \cap  E_3(n)\subset  F_n$. Hence, we only need to bound the probability of each event $E_j(n)$ from below by some universal constant. For $j=1,2$, this can be deduced from Lemma~\ref{lem:loop} and the restriction property of the loop soup. The case $j=3$ follows from \cite[Lemma~3.2]{GNQ1}.
    Therefore, we have $\Pb[F_n]>c$.
    We conclude the proof as the same argument works for the metric graph loop soup as well.
\end{proof}

Using an argument for the random cluster model with weight $q=4$ from~\cite{duminil2022planar}, we are able to derive an up-to-constants estimate for the quantity $\Nc(\Lambda)$ below, which will be crucial in our analysis later.


\begin{lemma}\label{lem:NG}
    Let $\Lambda$ be a metric graph such that $\wt B_{n/8}\subset \Lambda \subset \wt B_{n/4}$. Define the outer boundary  $\Gamma:=\partial_\out\{z\in \Z^2: \dist(z,\Lambda)\le 2\}$. 
    Let $\Nc(\Lambda)$ be the mean number of points in $\Gamma$ that is connected to $\partial\mB_{n/2}$ outside $\Lambda$, that is, 
    \begin{equation}\label{eq:Nc}
    \Nc(\Lambda):=\sum_{u\in\Gamma} \mathbb P[u\connect{\wt\Lc_{\mB_n\setminus \Lambda}}\partial\mB_{n/2}]. 
    \end{equation}
    Then, there exist universal constants $0<c<C<\infty$ such that $c<\mathbb \Nc(\Lambda)<C$.
\end{lemma}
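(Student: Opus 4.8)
The plan is to establish the two inequalities $\Nc(\Lambda)>c$ and $\Nc(\Lambda)<C$ separately, with constants uniform over all admissible $\Lambda$. Throughout, write $N:=\#\{u\in\Gamma:\ u\connect{\wt\Lc_{\mB_n\setminus\Lambda}}\partial\mB_{n/2}\}$, so that $\Nc(\Lambda)=\E[N]$. The lower bound is the softer of the two: since $N\ge\ind_{N\ge1}$, we have $\E[N]\ge\Pb[N\ge1]=\Pb[\Gamma\connect{\wt\Lc_{\mB_n\setminus\Lambda}}\partial\mB_{n/2}]$, so it suffices to bound this crossing probability below by a universal constant. Because $\Lambda\subset\wt B_{n/4}$, the annulus $A_{n/4+2,n/2}$ lies entirely in $\mB_n\setminus\Lambda$, and $\Gamma$ is a curve surrounding the origin inside $A_{n/8,n/4+2}$; as $\Gamma$ separates $\Lambda$ from $\partial\mB_{n/2}$, any cluster joining $\Gamma$ to $\partial\mB_{n/2}$ forces $N\ge1$. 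Such a crossing cluster can be produced with uniformly positive probability by gluing surrounding and crossing loops in bounded-aspect-ratio annuli, exactly as in Lemmas~\ref{lem:loop} and~\ref{lem:cluster}, using the independence of loops in disjoint regions. This gives $\Nc(\Lambda)\ge c$.

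For the upper bound, which is the heart of the lemma, I would bound each term of $\Nc(\Lambda)=\sum_{u\in\Gamma}\Pb[u\connect{\wt\Lc_{\mB_n\setminus\Lambda}}\partial\mB_{n/2}]$ by the harmonic measure of $u$ and then sum. The first step is, as in the proof of Lemma~\ref{le:gff}, to pass through the Gaussian free field via the isomorphism of Lemma~\ref{lem:iso} and the Lupu estimate \cite[Corollary~1]{lupu2018random}, which compares the sign-cluster connection probability of a single boundary vertex $u$ with a difference of effective resistances, and hence, via \eqref{eq:RGnew} and a last-exit decomposition, with a Brownian hitting probability. The outcome I aim for is a bound of the form $\Pb[u\connect{\wt\Lc_{\mB_n\setminus\Lambda}}\partial\mB_{n/2}]\lesssim P^{x_0}[X_{H_\Gamma}=u]$, where $x_0$ is a reference point at distance of order $n$ and the hitting is taken before exiting $\mB_n$ or entering $\Lambda$. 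Summing over $u\in\Gamma$ then yields $\Nc(\Lambda)\lesssim\sum_{u\in\Gamma}P^{x_0}[X_{H_\Gamma}=u]\le1$, since these hitting probabilities form a sub-probability measure on $\Gamma$. This is precisely the loop-soup analogue of the $q=4$ random-cluster computation of \cite{duminil2022planar}: the boundary one-arm exponent equals $1$ (the marginal value matching the boundary two-arm exponent of Theorem~\ref{thm:main2}), so the single-point connection probability is comparable, up to constants and with no extra power or logarithm, to the harmonic measure, which is exactly what makes the sum telescope to an order-one quantity.

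The main obstacle is to make this comparison robust to the arbitrary geometry of $\Lambda$, which can be adversarial: the outer boundary $\Gamma$ may have length as large as order $n^2$ (for instance for a tightly wound spiral-shaped $\Lambda$), so a naive union bound with a uniform per-point estimate is hopeless, and the harmonic-measure weighting is indispensable. Points of $\Gamma$ lying in deep crevices contribute negligibly because a connecting cluster must first escape the crevice, a cost captured exactly by the hitting probability $P^{x_0}[X_{H_\Gamma}=u]$; the content of the argument is to show that the GFF/loop-soup connection probability never exceeds this escape cost by more than a constant factor, uniformly in $u$ and $\Lambda$. This uniform comparison is where the $q=4$ input is used, together with the Green's-function and hitting estimates of \cite{ding2022crossing,chelkak2016robust} already invoked in Lemma~\ref{le:gff}, and where I expect the bulk of the technical work to lie.
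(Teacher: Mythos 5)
Your lower bound has a genuine gap. The reduction $\Nc(\Lambda)=\E[N]\ge\Pb[N\ge1]=\Pb[\Gamma\connect{\wt\Lc_{\mB_n\setminus\Lambda}}\partial\mB_{n/2}]$ is valid, but the claim that this crossing probability is uniformly positive ``by gluing surrounding and crossing loops in bounded-aspect-ratio annuli, exactly as in Lemmas~\ref{lem:loop} and~\ref{lem:cluster}'' does not go through: the target $\Gamma$ depends on the adversarial set $\Lambda$, can sit anywhere in $A_{n/8,n/4+2}$, and has arbitrary geometry. Any fixed annulus inside $\wt B_{n/4}$ may be partially covered by $\Lambda$, in which case the loops you want to glue are simply absent from $\wt\Lc_{\mB_n\setminus\Lambda}$; while loops confined to the guaranteed $\Lambda$-free annulus $\wt A_{n/4+2,n/2}$ need not touch $\Gamma$ at all (take $\Lambda=\wt B_{n/8}$, so that $\Gamma\approx\partial B_{n/8+2}$). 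Proving $\Pb[\Gamma\connect{\wt\Lc_{\mB_n\setminus\Lambda}}\partial\mB_{n/2}]\ge c$ uniformly in $\Lambda$ is essentially a capacity estimate of the same difficulty as the lemma itself. The paper avoids any crossing event: it establishes the two-sided pointwise comparison $\Pb[u\connect{\wt\Lc_{\mB_n\setminus\Lambda}}\partial\mB_{n/2}]\asymp{\rm Hm}(x_0,u;\Lambda\cup\{u\})$ and gets the lower bound by summing expectations, using ${\rm Hm}(x_0,u;\Lambda\cup\{u\})\ge{\rm Hm}(x_0,u;\Gamma)$ and $\sum_{u\in\Gamma}{\rm Hm}(x_0,u;\Gamma)={\rm Hm}(x_0,\Gamma)\ge{\rm Hm}(x_0,\mB_{n/8})\ge c$. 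You should extract your lower bound from the same pointwise comparison you already set up for the upper bound.

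Your upper bound is essentially the paper's argument (isomorphism, effective resistances, \cite[Corollary~1]{lupu2018random}, harmonic measure, summation), but there is an imprecision in the pointwise estimate you aim for. You claim $\Pb[u\connect{\wt\Lc_{\mB_n\setminus\Lambda}}\partial\mB_{n/2}]\lesssim P^{x_0}[X_{H_\Gamma}=u]$ with the walk killed on \emph{all} of $\Gamma$, so that the sum is trivially at most $1$. What the GFF/resistance computation actually produces is $\E[\mphi_n^\Lambda(x_0)\mid \mphi_n^\Lambda(u)]\propto{\rm Hm}(x_0,u;\Lambda\cup\{u\})$, i.e.\ the walk is killed on $\Lambda$ only, and other points of $\Gamma$ are transparent; this quantity dominates $P^{x_0}[X_{H_\Gamma}=u]$ and its sum over $\Gamma$ is not a sub-probability for free. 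The paper bridges this by projecting each $u\in\Gamma$ to a nearby $v_u\in\partial\Lambda$ with ${\rm Hm}(x_0,v_u;\Lambda)\ge 4^{-4}{\rm Hm}(x_0,u;\Lambda\cup\{u\})$ and bounded multiplicity of the map $u\mapsto v_u$, which yields $\Nc(\Lambda)\lesssim{\rm Hm}(x_0,\Lambda)\le1$. This step is missing from your sketch, though it is a local fix rather than a change of method.
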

\begin{proof}
For any $n\in\mathbb N^*$ and any $A\subset\mB_n$, $u\in\mB_n\setminus A$, $v\in A$, define the harmonic measure ${\rm Hm}^n(u,v;A)$ by
\begin{equation}\label{eq:def_hm}
    {\rm Hm}^n(u,v;A):=P^u[X_{H_{A\cup\partial\mB_n}}=v],
\end{equation}
where $X$ is the Brownian motion on $\wt\Z^2$ started from $u$, and $H_{A\cup\partial\mB_n}$ is the first hitting time of $A\cup\partial\mB_n$ by $X$. 
Let ${\rm Hm}^n(u,A):=\sum_{v\in A}{\rm Hm}^n(u,v;A)$.
We abbreviate ${\rm Hm}={\rm Hm}^n$ below for brevity. 

Let  $x_0=\frac34ni$. We first show that for all $u\in\Gamma$,
\begin{equation}\label{eq:two-arm'}
        \mathbb P[u\connect{\wt{\mathcal L}_{\mB_n\setminus \Lambda}}\partial\mB_{n/2}]\asymp {\rm Hm}(x_0,u;\Lambda\cup\{u\}).
\end{equation}
The proof is similar to that of Lemma~\ref{le:gff} by employing GFF (indeed easier as the cluster containing $u$ reaches far away from $\partial\Lambda$ now).
More specifically,
let $\mphi_n^\Lambda$ be the GFF on $\mB_n\setminus\Lambda$ with zero boundary conditions. 
Using the isomorphism theorem again, we have
for all $u\in\Gamma$,
\begin{equation}\label{eq:iso'}
    \Pb[u\connect{\wt{\mathcal L}_{\mB_n\setminus \Lambda}}\partial\mB_{n/2}]=2\Pb[u\connect{\mphi_n^\Lambda>0}\partial\mB_{n/2}].
\end{equation}
When $\mphi_n^\Lambda(u)>0$, let $\mathcal C_u$ be the cluster in $\{z:\mphi_n^\Lambda(z)>0\}$ containing $u$, with the convention that $\mathcal C_u=\emptyset$ if $\mphi_n^\Lambda(u)\le0$, then similar to the proof of Lemma \ref{le:gff}, we have the following inclusion of events for some $0<c_-<c_+<\infty$,
\begin{equation}\label{eq:sandwich'}
\begin{split}
    \{R^{\rm eff}(x_0,\Lambda\cup\{u\})-R^{\rm eff}(x_0,\Lambda\cup\mathcal C_u)>c_+\} \subset
    \{u\connect{\mphi_n^\Lambda>0}\partial\mB_{n/2}\}\\
    \subset\{R^{\rm eff}(x_0,\Lambda\cup\{u\})-R^{\rm eff}(x_0,\Lambda\cup\mathcal C_u)>c_-\}.
\end{split}
\end{equation}
Applying \cite[Corollary 1]{lupu2018random}, we obtain for any constant $c>0$,
$$\begin{aligned}
    \Pb[R^{\rm eff}(x_0,\Lambda\cup\{u\})-R^{\rm eff}(x_0,\Lambda\cup\mathcal C_u)>c\mid\mphi_n^\Lambda(u)]=&\ \Pb\left[W_t\le \mathbbm E[\mphi_n^\Lambda(x_0)\mid\mphi_n^\Lambda(u)],\ \forall 0<t<c\right]\\
    \asymp&\ {\rm Hm}(x_0,u;\Lambda\cup\{u\}) (\mphi_n^\Lambda(u)\vee0).
\end{aligned}$$
Taking the expectation on both sides, and noting that ${\rm Var}(\mphi_n^\Lambda(u))\asymp 1$ (since $1\le \dist(u,\Lambda)\le 2$ by definition), we have
$$\Pb[R^{\rm eff}(x_0,\Lambda\cup\{u\})-R^{\rm eff}(x_0,\Lambda\cup\mathcal C_u)>c]\asymp{\rm Hm}(x_0,u;\Lambda\cup\{u\}).$$
This, combined with \eqref{eq:iso'} and (\ref{eq:sandwich'}), 
gives \eqref{eq:two-arm'}.

Next, we use \eqref{eq:two-arm'} to get \eqref{eq:Nc}.    
On the one hand, ${\rm Hm}(x_0,u;\Lambda\cup\{u\})\ge{\rm Hm}(x_0,u;\Gamma)$, then by (\ref{eq:two-arm'}), for some $c>0$,
    $$\Nc(\Lambda)\gtrsim\sum_{u\in\Gamma}{\rm Hm}(x_0,u;\Gamma)={\rm Hm}(x_0,\Gamma)\ge{\rm Hm}(x_0,\mB_{n/8})\ge c.$$
    On the other hand, for all $u\in \Gamma$, there exists $v_u\in\partial\Lambda$ such that ${\rm dist}(u,v_u)\le2$ (arbitrarily choose one if there are multiple options). Thus, for all $u\in\Gamma$,
    $${\rm Hm}(x_0,v_u;\Lambda)\ge{\rm Hm}(x_0,u;\Lambda\cup\{u\})\cdot{\rm Hm}(u,v_u;\Lambda)\ge4^{-4}{\rm Hm}(x_0,u;\Lambda\cup\{u\}).$$
    Summing over $u\in\Gamma$ on both sides of the above inequality, we get
    $$\Nc(\Lambda)\le 4^4\sum_{u\in\Gamma}{\rm Hm}(x_0,v_u;\Lambda)\le 4^4\cdot 25\,{\rm Hm}(x_0,\Lambda)\le 4^4\cdot25,$$
    where we used the fact that $|\{u\in\Gamma:v_u=v\}|\le25$ for all $v\in\Lambda$ and ${\rm Hm}(x_0,\Lambda)\le 1$.
\end{proof}

With Lemma~\ref{lem:NG} at hand, we can now turn to Proposition~\ref{prop:k=1} itself. 

\begin{proof}[Proof of Proposition~\ref{prop:k=1}]
We begin with the lower bound.
For all $z\in\widetilde A_{n/8,3n/8}\cap\mathbb Z^d$, define a four-arm event around $z$ by
$$\widetilde{\mathcal A}_z:=\{\mbox{there are at least two outermost clusters in }\widetilde{\mathcal L}_n\mbox{ crossing }\widetilde A_{2,n/8}(z)\}.$$
Then, by the locality of four-arm events \cite[Proposition~5.3]{GNQ1}, 
    \begin{equation}\label{eq:locality}
        \Pb[\widetilde{\mathcal A}_z]\lesssim \widetilde\pi_4(2,n/8).
    \end{equation}
    Define $N=\sum_{z\in\widetilde A_{n/8,3n/8}\cap\mathbb Z^2}\mathbbm1_{\widetilde{\mathcal A}_z}$. 
    It follows from \eqref{eq:locality} and Lemma~\ref{lem:qua1} that
    \begin{equation}\label{eq:expNupper}
        \mathbbm E[N]\lesssim n^2\widetilde\pi_4(2,n/8)\lesssim n^2\widetilde\pi_4(n).
    \end{equation}

\begin{figure}[h]
\centering
\includegraphics[width=0.58\linewidth]{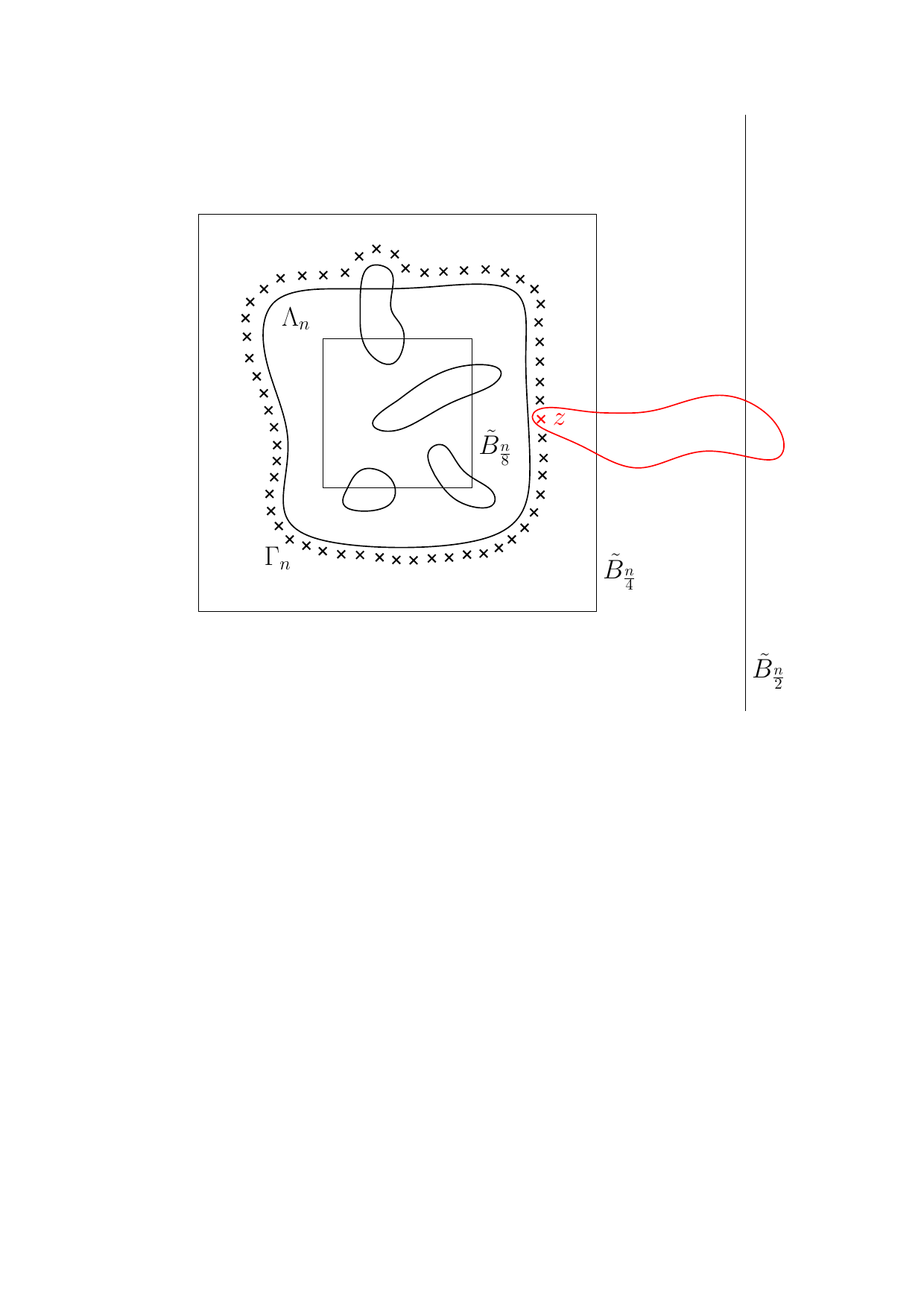}
\caption{\label{fig:lower_bound}
Proof of Proposition \ref{prop:k=1}. Our strategy is to ensure first that $\wt F_n$ occurs, and then find all $z \in \Gamma_n$ such that $z \connect{\wt{\mathcal L}_{\mB_n\setminus\Lambda_n}} \partial\mB_{n/2}$.}
\end{figure}

    Next, we will bound $\mathbbm E[N]$ from below by Lemma~\ref{lem:NG}. For this, we define $\Lambda_n$ as the union of $\mB_{n/8}$ and all the clusters in $\wt{\mathcal L}_{n}$ that intersect $\mB_{n/8}$, and let $\Gamma_n=\partial_\out\{z\in\mathbb Z^2:{\rm dist}(z,\Lambda_n)\le 2\}$ (see Figure~\ref{fig:lower_bound} for an illustration). 
    Recall $\wt F_n$ from Lemma \ref{lem:cluster}, which is the event that there exists a cluster of $\wt\Lc_{n}$ that intersects $\wt B_{n/8}$ with outer boundary contained in $\wt A_{n/8,n/4}$.
    Then, $\widetilde F_n$ is measurable with respect to $\Lambda_n$, and occurs with uniformly positive probability by Lemma \ref{lem:cluster}. 
    Moreover, on $\widetilde F_n$, for any vertex $z\in\Gamma_n$, we have $\{z\connect{\wt{\mathcal L}_{\mB_n\setminus\Lambda_n}}\partial\mB_{n/2}\}\subset\widetilde{\mathcal A}_z$. 
    Hence, applying Lemma \ref{lem:NG}, we obtain that on $\widetilde F_n$,
    $$\mathbb E[N\mid\Lambda_n]\ge\mathbb E[\Nc(\Lambda_n)\mid\Lambda_n]>c.$$ 
    Taking expectations above and using Lemma \ref{lem:cluster}, we get
    $\mathbbm E[N]>c'$ for some $c'>0$. This, together with (\ref{eq:expNupper}), yields the lower bound $\widetilde\pi_4(n)\gtrsim n^{-2}$.

    Now we prove the upper bound. In this case, we consider the event $\wt\Ac_z'$ that there are two outermost clusters $\Cc_1$ and $\Cc_2$ in $\wt\Lc_n$ such that $\Cc_1$ connects $\mB_1(z)$ and $\mB_{n/8}$ with its outer boundary contained in $\wt
    A_{n/8,n/4}$, and $\Cc_2$ connects $\mB_1(z)$ and $\partial\mB_{n/2}$.
    We claim that for all $z\in\widetilde A_{n/7,n/5}$ (we will turn to it later),
    \begin{equation}\label{eq:A'}
        \Pb[\widetilde{\mathcal A}_z']\gtrsim \widetilde\pi_4(n).
    \end{equation}  
    Letting $N'=\sum_{z\in\widetilde A_{n/8,n/4}\cap\mathbb Z^2}\mathbbm1_{\widetilde{\mathcal A}_z'}$, we have by \eqref{eq:A'} that
    \begin{equation}\label{eq:N'lower}
        \mathbbm E[N']\gtrsim n^2\widetilde\pi_4(n).
    \end{equation}
    On the other hand, noting that $\{N'\ge1\}\subset\widetilde F_n$, we have
    \begin{equation}\label{eq:N'upper}
        \mathbbm E[N']\le\mathbbm E[N'\mid\widetilde F_n]\le\mathbbm E\left[9\mathbbm E[\Nc(\Lambda_n)\mid\Lambda_n]\big|\widetilde F_n\right]<C.
    \end{equation}
    Here, the last inequality follows from Lemma \ref{lem:NG}, and the second one can be derived as follows. On the event $\widetilde F_n$, for each $z\in A_{n/8,n/4}$ such that $\widetilde{\mathcal A}_z'$ happens, we have $\mB_1(z)\subset\{w\in\wt\Z^2:\dist(w,\Lambda_n)\le2\}$ since
    the cluster $\mathcal C_1$ in the definition of $\widetilde{\mathcal A}_z'$ must intersect $\mB_1(z)$ and be contained in $\Lambda_n$; 
    meanwhile, there is a distinct cluster $\mathcal C_2$ connecting $\mB_1(z)$ and $\partial\mB_{n/2}$ which implies the existence of some $w_z\in B_1(z)\cap\Gamma_n$ such that $w_z\connect{\wt{\mathcal L}_{\mB_n\setminus\Lambda_n}}\partial\mB_{n/2}$. The second inequality then follows since $|\{z:w_z=w\}|\le9$ for all $z\in A_{n/8,n/4}$ satisfying $\widetilde{\mathcal A}_z'$. Now, combining (\ref{eq:N'lower}) and (\ref{eq:N'upper}), we obtain the upper bound $\widetilde\pi_4(n)\lesssim n^{-2}$ and conclude the proof.
\end{proof}



\begin{figure}[h]
\centering
\includegraphics[width=0.8\linewidth]{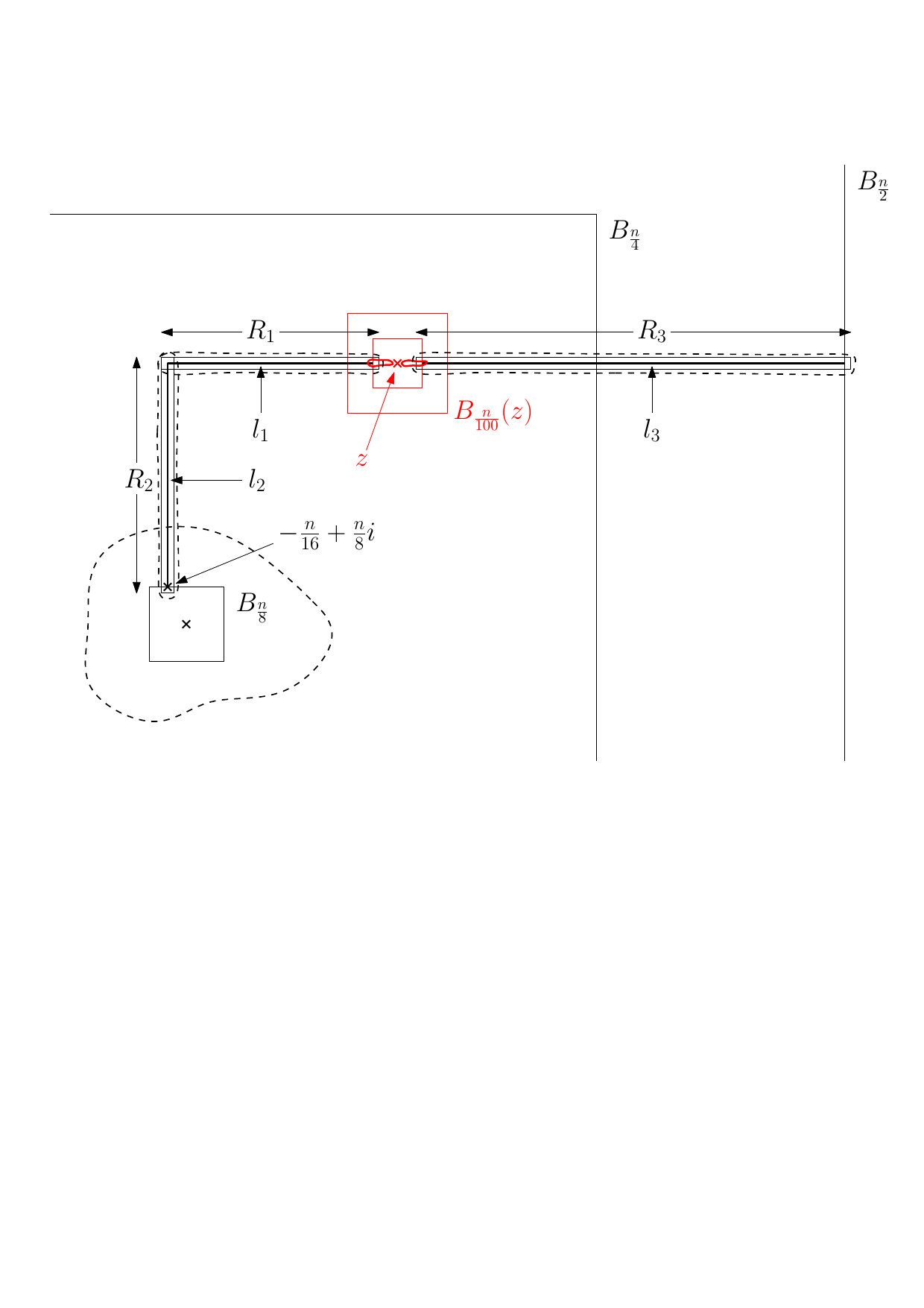}
\caption{\label{fig:gluing}This figure shows the explicit geometric construction used in the proof of (\ref{eq:A'}), based on separation and gluing arguments.}
\end{figure}

Finally, we now complete the proof of Proposition \ref{prop:k=1} by establishing the remaining claim. 

\begin{proof}[Proof of \eqref{eq:A'}]
    We prove it by concrete constructions (separation and gluing).
    Without loss of generality, assume that $\Re(z),\Im(z)\ge0$.
    Let $L[x,y]$ be the line segment connecting the points $x$ and $y$ in $\R^2$.
    Then, we consider three line segments: $l_1:=L[z-n/200,-n/16+\Im(z)i]$, $l_2:=L[-n/16+\Im(z)i,-n/16+ni/8]$, and $l_3:=L[z+n/200,n/2+\Im(z)i]$. For $j=1,2,3$, we define the boxes $R_j:=\mB_{n/4000}(l_j)$ and $R_j^*:=\mB_{n/2000}(l_j)$, where $\mB_r(A):=\bigcup_{z\in A}\mB_r(z)$. 
    Define the event 
    \[
    \wt H_j:=\{ \text{there exists a loop in $\wt{\mathcal L}_{R_j^*}$ that surrounds $R_j$} \}.
    \]
    Let $\widetilde{\mathcal A}_z''$ be the translation of the event $\widetilde{\mathcal A}_{\rm loc}^{\rm sep}(1,n/200)$ from the center $0$ to $z$, which is thus measurable with respect to  $\wt\Lc_{\mB_{n/100}(z)}$. 
    Define the metric-graph version of the event $E_1(n)$ from the proof of Lemma \ref{lem:cluster} by (hence also occurs with uniformly positive probability) 
    \[
    \wt E_1(n):=\{ \text{there exists a loop of $\wt\Lc_{n/7}$ that surrounds $\mB_{n/8}$} \}.
    \]
    Furthermore, let $G$ be the event
    $$G:=\{\mbox{all clusters in }\wt\Lc_n\setminus(\wt\Lc_{n/7}\cup\bigcup_{j=1}^3\wt\Lc_{R_j^*}\cup\wt\Lc_{\mB_{n/100}(z)}\mbox{ have diameter smaller than }n/4000\}.$$
    Then, we observe that $\widetilde{\mathcal A}_z'\supset\widetilde{\mathcal A}_z''\cap\bigcap_{j=1}^3\widetilde H_{\widetilde B_j}\cap E_1(n)\cap G$, and that the events involved in the intersection are mutually independent, hence it suffices to bound their probabilities from below. 
    
    By Lemmas~\ref{lem:sep} and~\ref{lem:qua1} (together with translation invariance), we have $\Pb[\widetilde{\mathcal A}_z'']\gtrsim \widetilde\pi_4(n/200)\gtrsim \widetilde\pi_4(n)$.
    Similar to Lemma \ref{lem:loop}, we have
    $\Pb[\widetilde H_j]\ge c$
    for all $1\le j\le3$, 
    and by \cite[Lemma~3.2]{GNQ1} again, we have $\Pb[G]>c'$. 
    Here $c,c'$ are universal constants. Hence,
    $$\begin{aligned}
        \Pb[\widetilde{\mathcal A}_z']\ge\Pb\left[\widetilde{\mathcal A}_z''\cap\bigcap_{j=1}^3\widetilde H_j\cap \wt E_1(n)\cap G\right]=\Pb[\widetilde{\mathcal A}_z'']\cdot\prod_{j=1}^3\Pb[\widetilde H_j]\cdot\Pb[\wt E_1(n)]\cdot\Pb[G]\gtrsim \widetilde\pi_4(n),
    \end{aligned}$$
    finishing the proof.
\end{proof}

\subsection{Proof of \eqref{eq:d-four-arm}}\label{subsec:d_four_arm}
We now carry out the proof of \eqref{eq:d-four-arm} by using ideas similar to those presented in Section~\ref{subsec:m_four_arm}.
The key ingredient is the following generalization of Lemma \ref{lem:NG} in the discrete setup, where for an integer $k$, we use the notation $\partial B_k:=\{v \in \Z^2:\|v\|_{\infty} = k\}$.
\begin{lemma}\label{le:d-NG}
    Let $1\le k\le n/8$.
    Let $\Lambda$ be a subset of $\mathbb Z^2$ such that $B_{n/8}\subset\Lambda\subset B_{n/4}$. Take $x_0=\lfloor \frac{3n}{4}\rfloor i$, and define $$\Gamma:=\{u\in B_n:0< {\rm dist}(u,\Lambda)\le k, {\rm Hm}(x_0,u;\Lambda\cup\{u\})>0\},$$
    where ${\rm Hm}(\cdot)={\rm Hm}^n(\cdot )$ is defined in an analogous way to \eqref{eq:def_hm}, using a simple random walk on $\Z^2$, instead of a Brownian motion on $\wt\Z^2$. Let $\Nc^k(\Lambda)$ be the quantity
    $$\Nc^k(\Lambda):=\sum_{u\in\Gamma}\Pb[B_k(u)\connect{\mathcal L_{B_n\setminus\Lambda}}\partial B_{n/2}].$$
    Then, there exists a universal constant $c>0$ such that $\Nc^k(\Lambda)<ck^2$.
\end{lemma}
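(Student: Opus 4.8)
The plan is to mirror the structure of the metric-graph estimate in Lemma~\ref{lem:NG}, but now working directly with the discrete random walk loop soup and with the key adaptation that each contribution comes from a box $B_k(u)$ rather than a single point. First I would reduce the connection probability for each $u \in \Gamma$ to a harmonic-measure quantity. The natural statement to aim for is an up-to-constants bound of the form
\begin{equation*}
\Pb[B_k(u)\connect{\mathcal L_{B_n\setminus\Lambda}}\partial B_{n/2}] \lesssim k \cdot {\rm Hm}(x_0,u;\Lambda\cup\{u\}),
\end{equation*}
valid for all $u \in \Gamma$. The extra factor of $k$ (compared with the single-point version in \eqref{eq:two-arm'}) is exactly what will produce the $k^2$ rather than a constant after summation, since summing ${\rm Hm}(x_0,u;\Lambda\cup\{u\})$ over a set of points at distance $\le k$ from $\Lambda$ contributes $O(k)$. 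I expect to obtain this bound by the same isomorphism-plus-effective-resistance route used in Lemma~\ref{lem:NG}: pass to the discrete GFF $\varphi_n^\Lambda$ on $B_n\setminus\Lambda$ with zero boundary conditions via the isomorphism theorem, control the connection event by a sandwich of effective-resistance differences, and apply \cite[Corollary~1]{lupu2018random} together with the estimate ${\rm Var}(\varphi_n^\Lambda(u))\asymp 1$ for $u$ at distance $\Theta(k)$ (here only an \emph{upper} bound is needed, which is cleaner).

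Second, having the pointwise bound, I would perform the summation. Writing $\Nc^k(\Lambda)\lesssim k\sum_{u\in\Gamma}{\rm Hm}(x_0,u;\Lambda\cup\{u\})$, the task becomes to show $\sum_{u\in\Gamma}{\rm Hm}(x_0,u;\Lambda\cup\{u\})\lesssim k$. This is where the geometry of $\Gamma$ enters: every $u\in\Gamma$ lies within distance $k$ of $\partial\Lambda$, so I would pick for each $u$ a nearest boundary point $v_u\in\partial\Lambda$ and use a last-exit / strong-Markov argument as in the upper bound of Lemma~\ref{lem:NG}, namely
\begin{equation*}
{\rm Hm}(x_0,v_u;\Lambda)\gtrsim {\rm Hm}(x_0,u;\Lambda\cup\{u\})\cdot {\rm Hm}(u,v_u;\Lambda).
\end{equation*}
The walk must travel a distance $O(k)$ from $u$ to reach $v_u\in\Lambda$, so ${\rm Hm}(u,v_u;\Lambda)\gtrsim 1/k$ by a standard lower bound on the probability that a planar random walk reaches a nearby target before exiting a box of radius $\sim k$. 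Rearranging gives ${\rm Hm}(x_0,u;\Lambda\cup\{u\})\lesssim k\,{\rm Hm}(x_0,v_u;\Lambda)$, and summing over $u$ while noting that each fixed $v\in\partial\Lambda$ is the chosen nearest point for at most $O(k^2)$ vertices $u$ (those within distance $k$) yields $\sum_{u\in\Gamma}{\rm Hm}(x_0,u;\Lambda\cup\{u\})\lesssim k\cdot k^2\cdot\max_v{\rm Hm}(x_0,v;\Lambda)/\,(\text{overcounting})$. I would organize the multiplicity bookkeeping carefully so that the total is $\lesssim k\cdot{\rm Hm}(x_0,\Lambda)\le k$, giving $\Nc^k(\Lambda)\lesssim k^2$.

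The main obstacle I anticipate is controlling the two $k$-dependences simultaneously and not losing an extra factor of $k$. The pointwise bound already carries one factor of $k$, and the summation over the $O(k)$-thick boundary layer naively carries another $k$ from the number of ``shells'' at distances $1,\dots,k$; the $1/k$ gain in ${\rm Hm}(u,v_u;\Lambda)$ must precisely cancel the correct one of these so that the final answer is $k^2$ and not $k^3$. The clean way to manage this is to group vertices $u\in\Gamma$ by their nearest boundary point $v$: for a fixed $v$, the contribution is $\sum_{u:\,v_u=v}{\rm Hm}(x_0,u;\Lambda\cup\{u\})\lesssim k\,{\rm Hm}(x_0,v;\Lambda)\cdot|\{u:v_u=v\}|\cdot(1/k)$—wait, more carefully, $\sum_{u:v_u=v}{\rm Hm}(x_0,u;\Lambda\cup\{u\})\lesssim \sum_{u:v_u=v} k\,{\rm Hm}(x_0,v;\Lambda)\lesssim k^2\,{\rm Hm}(x_0,v;\Lambda)$, after which summing over $v$ and bounding $\sum_v{\rm Hm}(x_0,v;\Lambda)\le 1$ gives $\sum_{u\in\Gamma}{\rm Hm}(x_0,u;\Lambda\cup\{u\})\lesssim k^2$, and hence $\Nc^k(\Lambda)\lesssim k\cdot k^2$—which is one factor too large. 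This tension signals that the correct pointwise bound should \emph{not} carry a full factor of $k$; rather, I expect the box $B_k(u)$ to contribute only a harmonic-measure-type term ${\rm Hm}(x_0,u;\Lambda\cup\{u\})$ without the extra $k$, because the connection to $\partial B_{n/2}$ through $\mathcal L_{B_n\setminus\Lambda}$ is essentially governed by whether the walk from $x_0$ reaches the cluster near $u$ at all. Reconciling these two accountings—deciding exactly where the single surviving factor $k^2$ originates—is the delicate point, and I would resolve it by tracking the $k$-dependence through the effective-resistance/GFF estimate and the nearest-point grouping in tandem, checking on the simplest case (e.g.\ $\Lambda=B_{n/8}$ a box, $\Gamma$ a discrete annular shell of width $k$) that the constants compose to give exactly $k^2$.
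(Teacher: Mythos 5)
Your proposal does not close, and you essentially concede this yourself at the end: with the pointwise bound $\Pb[B_k(u)\connect{\mathcal L_{B_n\setminus\Lambda}}\partial B_{n/2}]\lesssim k\,{\rm Hm}(x_0,u;\Lambda\cup\{u\})$ and the nearest-point grouping (multiplicity $\lesssim k^2$ per boundary vertex, or alternatively a gain ${\rm Hm}(u,v_u;\Lambda)\gtrsim 1/k$), the various factors of $k$ do not cancel correctly and you land on $k^3$. This is not a bookkeeping issue that can be fixed by ``tracking the $k$-dependence in tandem'': the single-point harmonic measure ${\rm Hm}(x_0,u;\Lambda\cup\{u\})$ is the wrong quantity to attach to $u$. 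Your first step is also problematic as stated: the effective-resistance identity of \cite[Corollary~1]{lupu2018random} is applied after conditioning on the value of the field at a \emph{single} vertex (as in the proof of \eqref{eq:two-arm'}), and there is no direct analogue when the source of the connection is a whole box $B_k(u)$; one cannot condition on the field on all of $B_k(u)$ and still read off a one-dimensional Brownian motion estimate.

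The paper resolves both difficulties differently. For the pointwise bound, it first dominates the RWLS by the metric-graph BLS, then attaches to $u$ an independent excursion cloud $\wt\Ec_u$ with intensity $\frac12\sum_{x,y\in\partial\wt\Lambda\cap\mB_{2k}(u)}\wt\nu_{x,y}^{\mB_n\setminus\wt\Lambda}$ and uses the disconnection trick (as in \eqref{eq:exc-disc}) to replace the event $\{B_k(u)\connect{}\partial B_{n/2}\}$ by the event that the whole boundary patch $\partial\wt\Lambda\cap\wt B_{2k}(u)$ is connected to $\partial B_{n/2}$ by the loop soup \emph{plus} excursions. By Lemma~\ref{lem:iso} this is a connection event for a GFF $\mphi_u$ with boundary condition $1$ on that patch, and the resistance argument then yields the bound $\lesssim\E[\mphi_u(x_0)]={\rm Hm}(x_0,\partial\wt\Lambda\cap\wt B_{2k}(u);\Lambda)$, i.e.\ the harmonic measure of a boundary patch of $\Lambda$ of diameter of order $k$, with no extra factor of $k$. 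The summation is then immediate by exchanging the order of summation: $\sum_{u\in\Gamma}{\rm Hm}(x_0,\partial\wt\Lambda\cap\wt B_{2k}(u);\Lambda)=\sum_{z\in\partial\wt\Lambda}{\rm Hm}(x_0,z;\Lambda)\cdot|\Gamma\cap B_{2k}(z)|\lesssim k^2\,{\rm Hm}(x_0,\partial\wt\Lambda;\Lambda)\le k^2$. The factor $k^2$ thus comes solely from the multiplicity $|\Gamma\cap B_{2k}(z)|\lesssim k^2$, and no hitting estimate of the form ${\rm Hm}(u,v_u;\Lambda)\gtrsim 1/k$ is needed anywhere. You would need to adopt this patch-based pointwise bound, together with the excursion-cloud device that makes the isomorphism applicable to a source set of diameter $k$, for your argument to go through.
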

\begin{proof}
Let $\wt\Lambda$ be the metric graph associated with $\Lambda$. 
For any $u\in\Gamma$, let $\wt\Ec_u$ be the set of boundary-to-boundary excursions on $\mB_n\setminus\wt\Lambda$ with intensity $\frac12 \sum_{x,y \in \partial\wt\Lambda\cap \mB_{2k}(u)} \wt\nu_{x,y}^{\mB_n\setminus\wt\Lambda}$, independent of the loop soups.
Since the RWLS is stochatically dominated by the BLS on the associated metric graph, we have 
\begin{equation*}
    \Pb[B_k(u)\connect{\mathcal L_{B_n\setminus\Lambda}}\partial B_{n/2}]
    \le \Pb[B_k(u)\connect{\wt\Lc_{\mB_n\setminus\wt\Lambda}}\partial B_{n/2}].
\end{equation*}
Let $E_u$ be the event that there exists an excursion in $\wt\Ec_u$ disconnecting $B_k(u)\setminus\Lambda$ from $\partial B_{n/2}$ inside $\mB_n\setminus\wt\Lambda$, which has probability uniformly bounded away from zero. Therefore, similar to (\ref{eq:exc-disc}), we obtain that
\begin{equation*}
\Pb[B_k(u)\connect{\wt\Lc_{\mB_n\setminus\wt\Lambda}}\partial B_{n/2}]
    \lesssim\Pb[\{B_k(u)\connect{\wt{\mathcal L}_{\mB_n\setminus\wt\Lambda}}\partial B_{n/2}\}\cap E_u]
    \le \Pb[\partial\wt\Lambda\cap \wt B_{2k}(u)\connect{\wt{\mathcal L}_{\mB_n\setminus\wt\Lambda}\oplus\wt\Ec_u}\partial B_{n/2}].
\end{equation*}
Now, let $\mphi_u$ denote the GFF on $\wt B_n\setminus\wt \Lambda$ with boundary conditions
$$\mphi_u(z)=\begin{cases}
1,&z\in\partial\wt\Lambda\cap\mB_{2k}(u);\\
0,&(\partial\wt\Lambda\setminus \wt B_{2k}(u))\cup\partial\wt B_n.\end{cases}$$
Analogous to \eqref{eq:iso}, we have 
\begin{equation*}
    \Pb[\partial\wt\Lambda\cap \wt B_{2k}(u)\connect{\mathcal L_{\mB_n\setminus\wt\Lambda}\oplus\wt\Ec_u}\partial B_{n/2}]
    = \Pb[\partial\wt\Lambda\cap \wt B_{2k}(u)\connect{\mphi_u>0}\partial B_{n/2}].
\end{equation*}
Similar to \eqref{eq:sandwich'}, there exists a constant $c>0$ such that
    \begin{equation*}
        \{\partial\wt\Lambda\cap \wt B_{2k}(u)\connect{\mphi_u>0}\partial B_{n/2}\}\subset\{R^{\rm eff}(x_0,\partial\wt\Lambda\cup \partial \mB_{n/2})-R^{\rm eff}(x_0,\partial\wt\Lambda\cup\widetilde\Lambda_0^*\cup \partial \mB_{n/2})>c\},
    \end{equation*}
where $\wt\Lambda_0^*$ is the union of clusters in $\{z\in\mB_n\setminus\wt\Lambda:\mphi_u(z)>0\}$ that intersect $\partial\wt\Lambda\cap\mB_{2k}(u)$.
Using \cite[Corollary 1]{lupu2018random} again, we conclude
\begin{equation*}
    \Pb[\partial\wt\Lambda\cap \wt B_{2k}(u)\connect{\mphi_u>0}\partial B_{n/2}]\lesssim \mathbb E[\mphi_u(x_0)]={\rm Hm}(x_0,\partial\wt\Lambda\cap \wt B_{2k}(u);\Lambda),
\end{equation*}
Note that since $\Lambda\subseteq \Z^2$, $\partial \wt \Lambda \cap \wt B_{2k}(u)$ is also a subset of $\Z^2$.
Combining all the above inequalities, we get the upper bound
$$\Pb[B_k(u)\connect{\mathcal L_{B_n\setminus\Lambda}}\partial B_{n/2}]\lesssim{\rm Hm}(x_0,\partial\wt\Lambda\cap \wt B_{2k}(u);\Lambda).$$
Summing over $u\in\Gamma$ on both sides above yields
$$\Nc^k(\Lambda)\lesssim \sum_{u\in\Gamma}{\rm Hm}(x_0,\partial\wt\Lambda\cap \wt B_{2k}(u);\Lambda)=\sum_{z\in\partial\wt\Lambda}{\rm Hm}(x_0,z;\Lambda)\cdot|\Gamma\cap B_{2k}(z)|\lesssim k^2\,{\rm Hm}(x_0,\partial \wt\Lambda;\Lambda)\le k^2,$$
proving the lemma.
\end{proof}

Finally, we finish the proof of (\ref{eq:d-four-arm}), which is analogous to the proof of the upper bound in (\ref{eq:four-arm1}).
\begin{proof}[Proof of (\ref{eq:d-four-arm})]
    By Lemma~\ref{lem:qua1}, we can assume that $k<10^{-10}n$. For all $z\in A_{n/7,n/5}$, consider the event $\mathcal A_z(k)$ that there are two outermost clusters $\mathcal C_1$ and $\mathcal C_2$ in $\mathcal L_{n}$ such that $\mathcal C_1$ connects $B_k(z)$ and $B_{n/8}$ with its outer boundary contained in $A_{n/8,n/4}$, and $\mathcal C_2$ connects $B_k(z)$ and $\partial B_{n/2}$. Then, the following counterpart of (\ref{eq:A'}) holds
    \begin{equation}\label{eq:zk4}
        \Pb[\mathcal A_z(k)]\gtrsim\pi_4(k,n).
    \end{equation} 
    On the one hand, if we define 
    $N^k:=\sum_{z\in A_{n/7,n/5}}\mathbbm 1_{\mathcal A_z(k)}$, then it satisfies $\mathbb E[N^k]\gtrsim n^2\pi_4(k,n)$ by (\ref{eq:zk4}).
    On the other hand, if we let $\Lambda_n'$ be the union of $B_{n/8}$ and all the clusters in $\Lc_n$ that intersect $B_{n/8}$, then $\mathbb E[\Nc^k(\Lambda_n')\mathbbm1_{\partial_{\out}\wt\Lambda_n'\subset A_{n/8,n/4}}]\lesssim k^2$ by Lemma \ref{le:d-NG}. When $N^k>0$, we have $\partial_{\out}\wt\Lambda_n'\subset A_{n/8,n/4}$ and $N^k=\Nc^k(\Lambda_n')$, and therefore 
    $$n^2\pi_4(k,n)\lesssim \mathbb E[N^k]\le \mathbb E[\Nc^k(\Lambda_n')\mathbbm1_{\partial_{\out}\wt\Lambda_n'\subset A_{n/8,n/4}}]\lesssim k^2.$$
    This completes the proof.
\end{proof}

\subsection*{Acknowledgments}
We are grateful to Tiancheng He for helpful discussions.
YB is partially supported by the elite undergraduate training program of School of Mathematical Sciences at Peking University. YG and WQ are supported by National Key R\&D Program of China (No.\ 2023YFA1010700) and a grant from City University of Hong Kong (Project No.\ 7200745). PN is partially supported by a GRF grant from the Research Grants Council of the Hong Kong SAR (project CityU11312325). WQ is also partially supported by a GRF grant from the Research Grants Council of the Hong Kong SAR (project CityU11308624).
	
\bibliographystyle{plain}
\bibliography{references}

\end{document}